\documentclass[12pt,reqno]{amsart}
\usepackage{graphicx, color}
\ExecuteOptions{usename}
\usepackage{amssymb}
\usepackage{amsmath}
\usepackage{latexsym}
\usepackage[all]{xy}
\usepackage{kotex}

\makeatletter


\newtheorem{theorem}{Theorem}[section]
\newtheorem{definition}[theorem]{Definition}
\newtheorem{proposition}[theorem]{Proposition}

\newtheorem{lemma}[theorem]{Lemma}
\newtheorem{remark}[theorem]{Remark}

\begin{document}
\title[Kinematic $N$-expansive flows]
{\bf Kinematic $N$-expansive flows }

\author{Manseob Lee}
\address
   {Manseob Lee :  Department of Mathematics \\
    Mokwon University, Daejeon, 35349, Korea}
\email{lmsds@mokwon.ac.kr}

\author{Jumi Oh }
\address
     { Jumi Oh :  Department of Mathematics \\
      Sungkyunkwan University, Suwon, 16419, Korea.}
\email{ohjumi@skku.edu}

\author{Junmi Park }
\address
  { Junmi Park :  Department of Mathematics \\
 Chungnam National University, Daejeon, 34134, Korea.}
\email{pjmds@cnu.ac.kr}

\thanks{
 2000 {\it Mathematics Subject Classification.} 37C20, 37C05, 37C29, 37D05. \\
\indent {\it Key words and phrases.} expansive, flows, $N$-expansive, kinematic flows, hyperbolic.}

\begin{abstract}
In light of the rich results of expansiveness in the dynamics of diffeomorphisms, it is natural to consider another notions of expansiveness such as countably-expansive, measure expansive, $N$-expansive and so on. In this paper, we introduce the notion of $N$-expansiveness for flows on a $C^{\infty}$ compact connected Riemannian manifold by using the kinematic expansiveness which is extension of the $N$-expansive diffeomorphisms. And we prove that a vector field $X$ on $M$ is $C^1$ robustly kinematic $N$-expansive then $X$ satisfies quasi-Anosov. Furthermore, we consider the hyperbolicity of local dynamical systems with kinematic $N$-expansiveness.
\end{abstract}

\maketitle

\section{Introduction}
In the theory of dynamical systems, its main goal is the study of the global orbit structure of flows (homeomorphisms, diffeomorphisms) with emphasizing invariant properties. The notion of expansiveness for a homeomorphism on a compact metric space $X$ introduced by Utz \cite{U} is important in the qualitative study of dynamical systems.

A homeomorphism $f : X \to X$ is called {\it {expansive}} if there is $\delta>0$ such that for any distinct points $x,y\in X$ there exists $i \in \mathbb{Z}$ such that $d(f^i(x), f^i(y))>\delta.$ A system is expansive if two orbits cannot remain close to each other under the action of the system. So it is a dynamical property that plays a key role in the study of the stability of the dynamics.

After that, a variety of another expression for the expansiveness were introduced by many mathematicians such as countably-expansive, measure expansive, $N$-expansive and so on. As pointed out by Morales \cite{M}, he introduced a notion generalizing the usual concept of expansiveness which are called {\it $N$-expansive}, {\it measure expansive}. In \cite{A}, Artigue {\it et al} defined a notion of {\it countably-expansiveness}. Given $x\in X$ and $\delta>0,$ we consider the set,
$$ \Gamma_\delta^f(x) = \{y\in X : d(f^i(x), f^i(y)) \leq \delta \; {\rm for\; all}\; i \in \mathbb{Z}\},$$
it is called by {\it the dynamical $\delta$-ball} of $f$ centered at $x \in X.$ We say that $f$ is {\it {expansive}} if there is $\delta>0$ such that $\Gamma_\delta^f(x)=\{x\}$ for all $x \in X,$ $f$ is {\it {countably-expansive}} if there is $\delta>0$ such that for all $x \in X$ the set $\Gamma_\delta^f(x)$ is countable, and $f$ is {\it {$N$-expansive}} if there is $\delta>0$ such that $\Gamma_\delta^f(x)$ has at most $N$ elements.

Many results on dynamics for homeomorphisms (or diffeomorphisms) can be extended to the case of vector fields. Bowen and Walters (\cite{B}) extended expansiveness to flows. In \cite{AA}, Artigue introduced the notion of {\it kinematic expansive} for flows and he proved that every $C^1$ robust kinematic expansive vector field without singularities on a compact connected smooth Riemannian manifold is quasi-Anosov.

In this paper, we consider a new notion which is combined kinematic expansiveness and N-expansiveness for flows. And then we prove that a vector field $X$ on $M$ is $C^1$ robustly kinematic $N$-expansive then $X$ satisfies quasi-Anosov. Furthermore, we consider the hyperbolicity of chain recurrent sets and homoclinic classes with kinematic $N$-expansive property.


\section{$N$-expansive flows}

Let $M$ be a $C^{\infty}$ compact connected Riemannian manifold, and let $d$ be the distance on $M$ induced from a Riemannian metric $\|\cdot\|$ on the tangent bundle $TM.$ Denote by $\mathfrak{X}^1(M)$ the set of all $C^1$ vector fields of $M$ endowed
with the $C^1$ topology. Then every $X\in\mathfrak{X}^1(M)$ generates
a $C^1$ flow $X_t:M\times \mathbb{R}\to M$, that is, a family of diffeomorphisms on $M$ such that $X_s\circ X_t=X_{s+t}$ for all $s,t \in\mathbb R$, $X_0=1_d$ and $dX_t(x)/dt|_{t=0}=X(x)$ for any $x\in M$. Here $X_t$ is called the \emph{integrated flow} of $X.$

In this paper, for $X,Y,\ldots \in \mathfrak{X}^1(M)$, we always denote the integrated
flows by $X_t, Y_t, \ldots$, respectively. For $x\in M$, let us denote the orbit $\{X_t(x): t\in\mathbb R\}$ of the flow $X_t$ (or $X$) through $x$ by $\mathcal{O}_X(x)$ if no confusion arise.
We say that a point $x\in M$ is a {\it singularity} of $X$ if $X(x)=\textbf{0}_x$; and an orbit $\mathcal{O}_X(x)$ is {\it periodic} (or {\it closed}) if it is diffeomorphic to the unit circle $S^1$.

A closed invariant set $\Lambda$ is called \emph{hyperbolic} for $X_t$ if there are constants $C>0$, $\lambda>0$ and a splitting ${T_x}M=E_x^s \oplus \langle X(x)\rangle \oplus E_x^u \ (x \in \Lambda)$ such that the tangent flow $D{X}_t : TM \to TM$ leaves invariant the continuous splitting and
$$\|{{DX_t}|_{E_x^s}}\| \leq Ce^{-\lambda t} \ {\rm and} \ \|{{DX_{-t}}|_{E_x^u}}\| \leq Ce^{-\lambda t}$$ {\rm for $x \in \Lambda$ and $t >0.$}
We say that $X \in \mathfrak{X}^1(M)$ is \emph{Anosov} if $M$ is hyperbolic for ${X_t}.$

A point $x \in M$ is called {\it nonwandering} if for any $t_1 >0$ and for any neighborhood $U$ of $x,$ there is $t \geq t_1$ such that $X_t(U) \cap U \neq \emptyset.$
The set of all nonwandering points of $X$ is denoted by $\Omega(X),$ and the set of periodic points of $X$ is denoted by P$(X).$  Clearly, $${\rm Sing}(X) \cup {\rm P}(X) \subset \Omega(X),$$
where ${\rm Sing}(X)$ denotes the set of singular points of $X.$

We say that $X\in\mathfrak{X}^1(M)$ satisfies $Axiom \ A$ if $P(X)$ is dense in $\Omega(X) \setminus {\rm Sing}(X)$ and $\Omega(X)$ is hyperbolic.
If $X$ satisfies Axiom $A$ then $\Omega(X)$ can be written as the finite disjoint union $\Omega(X)=\Lambda_1 \cup \cdots \cup \Lambda_l$ of closed invariant sets $\Lambda_i$ such that each $X|_{\Lambda_i}$ is transitive. Such a set $\Lambda_i$ is called a \emph{basic set} of $X$. A collection of basic sets $\Lambda_{i_1}, \cdots, \Lambda_{i_k}$ of $X$ is called a \emph{cycle} if for each $j=1,2,\cdots,k,$ there exists $a_j \in \Omega(X)$ such that $\alpha(a_j) \subset \Lambda_{i_j}$ and $\omega(a_j) \subset \Lambda_{i_{j+1}}\;(k+1\equiv 1).$
An Axiom $A$ $X \in \mathfrak{X}^1(M)$ is said to satisfy the \emph{no-cycle condition} if there are no cycles among the basic sets of $X.$

For any hyperbolic periodic point $x$ of $X$, the sets
\begin{align*}
W^s(x)=&\{y \in M : d(X_t(x), X_t(y))\to 0\; {\rm as}\;\; {t \to \infty}\}\;\; {\rm and} \\
\;W^u(x)=&\{y \in M : d(X_t(x), X_t(y))\to 0\; {\rm as}\;\; {t \to {-\infty}}\}
\end{align*}
are said to be the \emph{stable manifold} and \emph{unstable manifold} of $x$, respectively. Let  $X \in \mathfrak{X}^1(M)$ satisfy Axiom $A$. We say that $X$ satisfies the {\it quasi-transversality condition} if $T_x W^s (x) \cap T_x W^u (x) = \{\bf {0_x} \}$ for any $x\in M$.

Hereafter we assume that the \emph{exponential map} ${{\rm exp}_x}:{{T_x}M(1)} \to M$ is well defined for all $x \in M,$ where ${{T_x}M}(r)$ denotes the $r$-ball $\{v \in {{T_x}M} : \|v\| \leq r\}$ in ${{T_x}M}.$ Let $M_X$ be the set of regular points of $X \in \mathfrak{X}^1(M)$ ; i.e., $M_X=\{x \in M : X(x)\neq \textbf{0}_x\}.$ For any $x \in M_X,$ we let $N_x=({\rm Span} X(x))^{\bot}\subset T_xM \ \ {\rm and} \ \ {\Pi_{x,r}}={\rm exp}_x(N_x(r)),$ where $N_x(r)=N_x \cap {{T_x}M}(r)$ for $0 < r \leq 1.$

For any $x \in M_X$ and $t \in \mathbb{R},$ we take a constant $r>0$ and a $C^1$ map $\tau : \Pi_{x,r} \to \mathbb{R}$ such that $\tau(x)=t$ and $X_{\tau(y)}(y) \in \Pi_{{X_t(x)},1}$ for any $y \in \Pi_{x,r}.$ Then the \emph{Poincar\'{e} map} $f_{x,t} : \Pi_{x,{r_0}} \to \Pi_{{X_t}(x),1}$ is given by $${f_{x,t}}(y)=X_{\tau(y)}(y) \ \ {\rm for} \ \  y \in \Pi_{x,{r_0}}.$$
If $X_t(x)\neq x$ for $0<t\leq {t_0}$ and $r_0$ is sufficiently small, then $(t,y)\mapsto {X_t}(y)$ $C^1$ embeds $\{(t,y) \in {\mathbb{R}\times \Pi_{x,r}} :\; 0\leq t \leq \tau(y)\}$ for $0<r\leq {r_0}.$ The image $\{{X_t}(y) : \; y \in \Pi_{x,r}\; {\rm and} \;\; 0\leq t \leq {\tau(y)}\}$ is denoted by $F_x(X_t,r,t_0).$
For $\epsilon >0,$ let ${\mathcal{N}_\epsilon}(\Pi_{x,r})$ be the set of diffeomorphisms $\xi : \Pi_{x,r} \to \Pi_{x,r}$ such that supp$(\xi) \subset \Pi_{x,{\frac{r}{2}}}$ and $d_{C^1}(\xi,\;{1_d})<\epsilon.$ Here $d_{C^1}$ is the usual $C^1$ metric, ${1_d} : \Pi_{x,r} \to \Pi_{x,r}$ is the identity map, and supp$(\xi)$ is the closure of the set where it differs from ${1_d}.$

Let $\mathcal{N}=\bigcup_{x \in M_X}N_x$ be the normal bundle based on $M_X.$ Then we can introduce a flow (which is called a \emph{linear Poincar\'{e} flow} for $X$) on $\mathcal{N}$ by $$\Psi_t : \mathcal{N} \to \mathcal{N}, \; {\Psi_t}|_{N_x}=\pi_{N_{{X_t}(x)}}\circ {D_x}{X_t}|_{N_x},$$
where $\pi_{N_x} : {T_x}M \to {N_x}$ is the natural projection along the direction of $X(x),$ and ${D_x}{X_t}$ is the derivative map of $X_t.$ Then we can see that
$${\Psi_t}|_{N_x}=D_x{f_{x,t}} \  \  {\rm and} \  \ {f_{x,t}}\circ {{\rm exp}_x}={{\rm exp}_{X_t(x)}}\circ {\Psi_t}.$$
We say that $X \in \mathfrak{X}^1(M)$  is {\it quasi-Anosov} if $ \rm{sup}_{t \in \mathbb{R}} ||\Psi_t ({\it v})|| < \infty$ for $v \in \mathcal{N}$ then $v=\bf{0}$.

We say that a flow $X_t$ on $M$ is \emph{expansive} if for any $\epsilon >0$ there is $\delta >0$ such that if $x,y \in X$ satisfy $d(X_t(x), X_{h(t)}(y))\leq \delta$ for some $h\in \mathcal{H}$ and all $t \in \mathbb{R}$ then $y\in X_{[-\epsilon,\epsilon]}(x)$, where $\mathcal{H}$ denotes the set of continuous maps $h:\mathbb{R} \to \mathbb{R}$ with $h(0)=0.$ Such a $\delta$ is called an {\it expansive constant} of $X_t.$\\

For a flow $X_t$ on $M$ and all $x \in M,$ we denote by $\Gamma_{\delta}^{X_t}(x)$ the set $$\{y \in M : \exists h \in \mathcal{H}\; {\rm s.t.}\; d(X_{h(t)}(y), X_t(x)) \leq \delta,\;\forall t \in \mathbb{R} \}.$$

Morales (\cite{M}) introduced the notion of $N$-expansiveness for discrete type. We extend the case to the continuous type using the notion of kinematic expansive flows. So, we define the kinematic $N$-expansiveness for flows.

\begin{definition}
{\rm We say that $X\in\mathfrak{X}^1(M)$ is {\it {kinematic $N$-expansive}}, for given $N \in \mathbb{N},$ if for any $\epsilon >0$ there exists $\delta >0$ such that $$\Gamma_{\delta}(x)=\{y \in M : d(X_t(x), X_t(y))\leq \delta,\; \forall\; t \in \mathbb{R}\}$$ then $\Gamma_{\delta}(x)$ has at most $N$-elements.}
\end{definition}

First of all, we can check that kinematic $N$-expansive flow is disconnected and it has no singularity as following lemmas.

\begin{lemma}\label{disconn}
For $N \in \mathbb{N},$ if $X \in \mathfrak{X}^1(M)$ is kinematic $N$-expansive then it is totally disconnected.
\end{lemma}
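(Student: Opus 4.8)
The plan is to read \emph{it} as the singular set $\mathrm{Sing}(X)$ and to argue by contradiction: a connected component of $\mathrm{Sing}(X)$ with more than one point would force the dynamical ball $\Gamma_{\delta}(x)$ to contain infinitely many points, in violation of the bound $N$. Fix $N$ and assume $X$ is kinematic $N$-expansive, so that for a suitable $\epsilon>0$ the definition supplies $\delta>0$ with the property that $\Gamma_{\delta}(x)$ has at most $N$ elements for every $x\in M$.

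The first step is to record the elementary but decisive observation that singularities are fixed points of the flow: if $x\in\mathrm{Sing}(X)$ then $X(x)=\mathbf{0}_x$, whence $X_t(x)=x$ for all $t\in\R$. Consequently, for any two singularities $x,y\in\mathrm{Sing}(X)$ we have $d(X_t(x),X_t(y))=d(x,y)$ for every $t$, so that $y\in\Gamma_{\delta}(x)$ if and only if $d(x,y)\le\delta$. In particular, when $x$ is a singularity,
$$\Gamma_{\delta}(x)\supseteq\{y\in\mathrm{Sing}(X): d(x,y)\le\delta\}.$$

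Next, suppose toward a contradiction that $\mathrm{Sing}(X)$ is not totally disconnected. Then some connected component $C\subseteq\mathrm{Sing}(X)$ contains two distinct points; choose $x\in C$. The continuous function $d(x,\cdot):C\to\R$ maps the connected set $C$ onto a connected subset of $\R$ containing $0$ and some $c>0$, hence onto an interval that contains $[0,c]$. Therefore, for every $\delta>0$ the set $\{y\in C: d(x,y)\le\delta\}$ is infinite (indeed uncountable). Since $C\subseteq\mathrm{Sing}(X)$, the previous step places all of these infinitely many points inside $\Gamma_{\delta}(x)$, contradicting the fact that $\Gamma_{\delta}(x)$ has at most $N$ elements. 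Hence $\mathrm{Sing}(X)$ is totally disconnected.

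The fixed-point property of singularities and the $N$-bound are routine unwindings of the definitions; the one step that deserves care is the topological claim that a nondegenerate connected set meets every closed $\delta$-ball about one of its points in infinitely many points. This is exactly where the surjection of $d(x,\cdot)$ onto a nondegenerate interval is used, and where the hypothesis that $C$ has more than one point is essential. I expect this to be the only genuine obstacle, the remainder being a direct computation with the kinematic (non-reparametrized) ball $\Gamma_{\delta}(x)$.
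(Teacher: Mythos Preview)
Your argument is internally sound, but you have read the pronoun \emph{it} differently from the paper. You take ``it'' to be $\mathrm{Sing}(X)$ and then correctly observe that singularities are fixed by the flow, so a nondegenerate connected component of $\mathrm{Sing}(X)$ would force $\#\Gamma_\delta(x)=\infty$. That is a valid proof of the statement ``$\mathrm{Sing}(X)$ is totally disconnected.''

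The paper, however, is not talking about $\mathrm{Sing}(X)$. Its proof takes two \emph{arbitrary} points $x,y\in M$ and a short arc $\mathcal{I}$ joining them, with no mention of singularities; and the Remark immediately after the lemma glosses ``$X_t$ is totally disconnected'' as the informal assertion that distinct orbits are ``separated apart.'' More tellingly, the paper invokes this lemma in the proof of Lemma~\ref{sing} to conclude (after producing an arc ending at a singularity) that $\mathrm{Sing}(X)=\emptyset$. If the present lemma only said that $\mathrm{Sing}(X)$ is totally disconnected, that deduction would not follow, since a totally disconnected set can certainly be nonempty. So whatever the authors intend by ``it is totally disconnected,'' it is something about the flow or the dynamical balls $\Gamma_\delta$, not about $\mathrm{Sing}(X)$.

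In short: your proof is correct for your reading, but it does not match the paper's intended statement, proof, or subsequent use. It is fair to add that the paper's own formulation is vague and its proof leaves unexplained why points on an arbitrary short arc in $M$ should remain $\delta$-close under the flow for all time; your restriction to singularities supplies precisely the invariance the paper's argument is missing, but at the price of establishing a weaker conclusion than the one the paper goes on to use.
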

\begin{proof}
Suppose that $X$ is not totally disconnected. Take $x, y \in M.$ Let $\mathcal{I}$ be a closed small arc with two end points $x$ and $y.$ For any $\delta>0,$ the length of $\mathcal{I}$ is less than $\delta,$ that is, $l(\mathcal{I}) \leq \delta.$ Let $e= \delta/2$ be an expansive constant. Then for any $z \in \mathcal{I},$ $$\Gamma_e^{X_t}(z)=\{ w \in M : d(X_t(z), X_t(w)) \leq e \;\;\forall t \in \mathbb{R}\}.$$ So, $\# \Gamma_e^{X_t}(z) >N.$ That is, $X$ is not kinematic $N$-expansive. This contradicts to complete the proof.
\end{proof}

\begin{remark}
A flow $X_t$ is totally disconnected, it means that for $x \in M,$ each orbits of $X_t(x)$ are separated apart. Therefore, if a flow $X_t$ has the kinematic $N$-expansive property then we can easily see that there exist only countably many orbits in a $\delta$-neighborhood of a flow $X_t$ for each $x \in M$ because $X_t$ is totally disconnected.
\end{remark}

\begin{lemma}\label{sing}
Let $X \in \mathfrak{X}^1(M).$ If $X$ is kinematic $N$-expansive then Sing$(X)=\emptyset.$
\end{lemma}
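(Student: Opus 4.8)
The plan is to argue by contradiction. Suppose that $X$ has a singularity $\sigma$, so that $X_t(\sigma)=\sigma$ for every $t\in\mathbb R$. Fix $\epsilon>0$ and let $\delta>0$ be a constant witnessing the kinematic $N$-expansiveness of $X$. Because the orbit of $\sigma$ collapses to the single point $\{\sigma\}$, we have
$$\Gamma_{\delta}(\sigma)=\{y\in M : d(\sigma,X_t(y))\le\delta \text{ for all } t\in\mathbb R\},$$
that is, $\Gamma_{\delta}(\sigma)$ is exactly the maximal $X_t$-invariant set contained in the closed ball $\overline{B(\sigma,\delta)}$. The goal is to show that this set contains more than $N$ points, which contradicts the definition of kinematic $N$-expansiveness.

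The key mechanism is that $X$ vanishes at $\sigma$, which forces the flow to be slow near $\sigma$. Suppose we can find a point $p\neq\sigma$ whose full orbit $\mathcal O_X(p)$ remains inside $\overline{B(\sigma,\delta)}$. Then every point $y$ of $\mathcal O_X(p)$ satisfies $d(\sigma,X_t(y))\le\delta$ for all $t$, so $\mathcal O_X(p)\subset\Gamma_{\delta}(\sigma)$; since $p$ is a regular point, $\mathcal O_X(p)$ is an injectively immersed interval (or circle) and is in particular infinite, whence $\#\Gamma_{\delta}(\sigma)=\infty>N$. Even if one only manages to trap orbits inside $\overline{B(\sigma,\delta)}$ for long finite times, the slowness estimate $d(X_t(p),X_{t+s}(p))\le |s|\sup_{q\in\overline{B(\sigma,\delta)}}\|X(q)\|$, valid for all $t$, still shows that small time-translates $X_s(p)$ of such an orbit stay $\delta$-close to $\mathcal O_X(p)$, producing infinitely many distinct points in the relevant dynamical ball; alternatively, the existence of such an orbit arc already contradicts total disconnectedness (Lemma~\ref{disconn}).

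The step I expect to be the real obstacle is precisely the production of an orbit (other than $\{\sigma\}$) that stays near $\sigma$ for all time — or at least of infinitely many orbits crowded arbitrarily close to $\sigma$ — since for a hyperbolic singularity the local maximal invariant set is just $\{\sigma\}$ and no purely local argument suffices. I would deal with this by splitting into the possible local pictures at $\sigma$: in the non-hyperbolic case a nontrivial invariant continuum sitting arbitrarily close to $\sigma$ is available and the argument above applies at once, while in the hyperbolic case one must bring in the compactness of $M$ and the global recurrence of $X$ to locate an orbit asymptotic to $\sigma$ in both time directions and then run the slowness estimate along it. Once any such trapped configuration is secured, the count $\#\Gamma_{\delta}(\cdot)>N$ is immediate and Lemma~\ref{sing} follows.
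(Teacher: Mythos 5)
Your reduction is accurate as far as it goes: since $X_t(\sigma)=\sigma$ for all $t$, the set $\Gamma_\delta(\sigma)$ is precisely the maximal $X_t$-invariant subset of $\overline{B(\sigma,\delta)}$, and the lemma would follow if one could always exhibit a nontrivial orbit trapped in that ball. But the step you yourself flag as ``the real obstacle'' is a genuine gap, and the fix you sketch does not close it. If $\sigma$ is a hyperbolic singularity, then for all small $\delta$ the maximal invariant set of $\overline{B(\sigma,\delta)}$ is exactly $\{\sigma\}$, so $\Gamma_\delta(\sigma)=\{\sigma\}$ and no contradiction can be extracted from the dynamical ball at $\sigma$ itself. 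Your proposal to use compactness of $M$ and ``global recurrence'' to locate an orbit asymptotic to $\sigma$ in both time directions fails twice over: such an orbit need not exist for a general $C^1$ vector field, and even when a homoclinic orbit to $\sigma$ does exist it must leave every small neighborhood of $\sigma$, so it contributes no points to $\Gamma_\delta(\sigma)$. The slowness estimate is likewise beside the point here, because the $\Gamma_\delta$ in the paper's definition of kinematic $N$-expansiveness is the synchronous one (no reparametrization $h$ appears), and time-translates $X_s(p)$ of a trapped point $p$ are simply other points of the same trapped orbit, which you have already counted. So the isolated (in particular hyperbolic) case is left entirely open in your argument.

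For comparison, the paper's own proof is a two-line reduction to Lemma \ref{disconn}: it supposes $\sigma\in{\rm Sing}(X)$ is \emph{not isolated}, joins $\sigma$ to a nearby point by a small closed arc, and invokes total disconnectedness. That argument addresses only the non-isolated case (and the arc it produces is not obviously a dynamically relevant object on which orbits stay $\delta$-close), so the difficulty you identified is not resolved there either. You have correctly located where the real content of the lemma would have to lie, but neither your sketch nor the paper supplies an argument that excludes a hyperbolic singularity.
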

\begin{proof}
Suppose that there exists $\sigma \in$ Sing$(X)$ which is not isolated. Take $x \in M,$ and let $\mathcal{I}$ be a closed small arc with two endpoints $x$ and $\sigma.$ By Lemma \ref{disconn}, this is a contradiction. So, Sing$(X)=\emptyset.$
\end{proof}

A long-time goal in the theory of dynamical systems has been to describe and characterize systems exhibiting dynamical properties that are preserved under small perturbations. A fundamental problem in recent years is to study the influence of a robust dynamic property (that is, a property that holds for a given system and all $C^1$ nearby systems). In this context, it is important to consider the stability of the system which has the expansiveness from the robust point of view and one can find some results as follows.

M\~an\'e \cite{M1} proved that a robustly expansive diffeomorphism is quasi-Anosov, and Moriyasu {\it et. al.} \cite{S} proved the result for the flow case. Also there is a result for the robustly $N$-expansive diffeomorphism, that is, Lee \cite{ML} showed that for each $n\in \mathbb{N}$, if a diffeomorphisms $f$ belongs to $C^1$ interior of the set of $N$-expansive diffeomorphisms then $f$ satisfies quasi-Anosov.

However, there is no result of the $N$-expansiveness for flows yet. Therefore, in this paper, we want to extend the result of \cite{ML} to the flows using the property of robust kinematic $N$-expansiveness.\\

Let us prove this problem (Theorem 1), we first need following two lemmas.

\begin{lemma}\label{1}
Let $X \in \mathfrak{X}^1(M)$ have no singularities, $p \in \gamma \in P(X)$
$({X_T}(p)=p),$ where $\gamma$ is a periodic orbit of the flows and let $f : \Pi_{p,{r_0}} \to \Pi_p$ be the Poincar\'{e} map for some ${r_0}>0.$ Let $\mathcal{U} \subset \mathfrak{X}^1(M)$ be a $C^1$ neighborhood of $X,$ and let $0 < r \leq {r_0}$ be given. Then there are ${\delta_0}>0$ and $0< {\epsilon_0} < \frac{r}{2}$ such that for a linear isomorphism $\mathcal{O} : {N_p} \to {N_p}$ with $\|\mathcal{O}-{D_p}f\| < \delta_0,$ there is $Y \in \mathcal{U}$ satisfying
\begin{align*}
{\rm(i)}&\; Y(x)=X(x),\;\; {\rm if}\; x \notin F_p(X_t,r, \frac{T}{2}),\\
{\rm(ii)}&\; p \in \gamma \in P(Y_t),\\
{\rm(iii)}&\;g(x)= \left\{ \begin{array}{ll}
exp_p \circ \mathcal{O} \circ exp^{-1}_p(x),\;\; &\textrm{{\rm if} $x \in B[p,{\frac{\epsilon_0}{4}}] \cap \Pi_{p,r}$}\\
f(x),\;\; &\textrm{{\rm if} $x \notin B[p,{\epsilon_0}] \cap \Pi_{p,r}$},\\
\end{array} \right.
\end{align*}
 where $g : \Pi_{p,r} \to \Pi_p$ is the Poincar\'{e} map defined by $Y_t.$
\end{lemma}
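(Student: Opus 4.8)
The statement is a Franks-type perturbation lemma for flows, localized along the orbit segment $\{X_t(p):0\le t\le \tfrac{T}{2}\}$, so the plan is to realize the prescribed linear datum $\mathcal O$ first as a perturbation of the Poincar\'e map $f$ on the transversal $\Pi_{p,r}$, and then to realize that in turn as a $C^1$-small perturbation of the vector field supported inside the flow box $F_p(X_t,r,\tfrac{T}{2})$. For the first part I would read everything in the chart $\exp_p:N_p(r)\to\Pi_{p,r}$, writing $\hat f=\exp_p^{-1}\circ f\circ\exp_p$, so that $\hat f(0)=0$ and $D\hat f(0)=D_pf$. Picking a $C^\infty$ function $\rho:[0,\infty)\to[0,1]$ with $\rho\equiv 1$ on $[0,\tfrac14]$, $\rho\equiv 0$ on $[1,\infty)$, and setting $\rho_{\epsilon_0}(v)=\rho(\|v\|/\epsilon_0)$, I would define
\[
\hat g(v)=\rho_{\epsilon_0}(v)\,\mathcal O v+\bigl(1-\rho_{\epsilon_0}(v)\bigr)\,\hat f(v),\qquad g=\exp_p\circ\hat g\circ\exp_p^{-1}.
\]
By construction $g=f$ on $\Pi_{p,r}\setminus B[p,\epsilon_0]$, $g=\exp_p\circ\mathcal O\circ\exp_p^{-1}$ on $B[p,\tfrac{\epsilon_0}{4}]\cap\Pi_{p,r}$ (which is (iii)), and $g(p)=p$.

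The point to verify in the first part is that $g$ can be kept $C^1$-close to $f$: since $\hat f(v)-D_pf\cdot v=o(\|v\|)$, on the annulus $\tfrac{\epsilon_0}{4}\le\|v\|\le\epsilon_0$ one gets an estimate of the form $\|D\hat g(v)-D\hat f(v)\|\le C\bigl(\|\rho'\|_\infty\,\|\hat f(v)-D_pf\cdot v\|/\epsilon_0+\|\mathcal O-D_pf\|\bigr)+o(1)$, so by fixing $\epsilon_0$ small first (to absorb the nonlinear term through the factor $\|\rho'\|_\infty/\epsilon_0$) and then $\delta_0$ small, $g$ is as $C^1$-close to $f$ as we wish; in particular $g:\Pi_{p,r}\to\Pi_p$ is a diffeomorphism onto its image.

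For the second part I would parametrize the flow box by $\Phi(s,w)=X_{s\tau(w)}(w)$ on $[0,1]\times\Pi_{p,r}$, where $\tau:\Pi_{p,r}\to\mathbb{R}$ is the return time to $\Pi_{X_{T/2}(p),1}$ with $\tau(p)=\tfrac{T}{2}$; by the embedding property stated in the definition of $F_p$ this identifies $F_p(X_t,r,\tfrac{T}{2})$ with $[0,1]\times\Pi_{p,r}$ in such a way that $X$ is a positive multiple of $\partial_s$ and the $X$-holonomy from $\{s=0\}$ to $\{s=1\}$ is the identity in the coordinate $w$. Consequently, any vector field equal to $X$ outside the box has Poincar\'e map $f\circ h$ on $\Pi_{p,r}$, where $h$ is its holonomy across the box read in $w$, so I need $h=f^{-1}\circ g$; by the first part this $h$ is a diffeomorphism of $\Pi_{p,r}$ that is $C^1$-close to the identity, equal to the identity outside $B[p,\epsilon_0]\cap\Pi_{p,r}$, and fixes $p$. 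I would then choose an isotopy $h_s$ from $h_0=\mathrm{id}$ to $h_1=h$ with $h_s\equiv\mathrm{id}$ for $s$ near $0$ and $h_s\equiv h$ for $s$ near $1$, each $h_s$ supported in $B[p,\epsilon_0]\cap\Pi_{p,r}$ and fixing $p$; its time-dependent generator $V_s$ on $\Pi_{p,r}$ is then $C^1$-small (the parameter interval has fixed length and $\|h-\mathrm{id}\|_{C^1}$ is small), supported in $B[p,\epsilon_0]$, vanishing at $p$, and $\equiv 0$ near $s=0,1$. Finally I define $Y$ to equal $X$ off $F_p(X_t,r,\tfrac{T}{2})$ and, inside, to equal $X$ plus the horizontal vector field that in the coordinates $(s,w)$ is $\tau(w)^{-1}V_s(w)$: then $Y\in\mathfrak{X}^1(M)$ agrees with $X$ off $F_p(X_t,r,\tfrac{T}{2})$, giving (i), and along $\gamma$ because $V_s(p)=0$, so $\gamma\in P(Y_t)$, giving (ii); its holonomy across the box is $h_1\circ h_0^{-1}=h$, hence its Poincar\'e map on $\Pi_{p,r}$ is $f\circ h=g$, of the form (iii); and for $\epsilon_0$ and then $\delta_0$ small enough the added field has arbitrarily small $C^1$ norm, so $Y\in\mathcal U$.

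I expect the main obstacle to be in the second part: arranging the vector-field correction so that it is simultaneously $C^1$-small, strictly supported inside $F_p(X_t,r,\tfrac{T}{2})$ — in particular vanishing near all faces of the flow box, so that (i) is not spoiled just outside it — and yet realizes the prescribed holonomy $h$ while keeping $\gamma$ pointwise fixed. The remaining delicate point is the order of quantifiers in the first part, where $\epsilon_0$ must be chosen before $\delta_0$ in order to absorb, via the factor $\|\rho'\|_\infty/\epsilon_0$, the $o(\|v\|)$ coming from the nonlinearity of $f$ at $p$.
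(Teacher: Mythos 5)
The paper does not actually prove this lemma: its ``proof'' is the single line ``See Lemma 1.3 in [Moriyasu--Sakai--Sumi]'', so any honest argument you give is necessarily a different route. What you propose is the standard two-stage Franks-type argument that underlies that citation, and it is essentially sound: the bump-function interpolation $\hat g=\rho_{\epsilon_0}\mathcal O+(1-\rho_{\epsilon_0})\hat f$ with the quantifier order ``$\epsilon_0$ first, then $\delta_0$'' correctly absorbs the $o(\|v\|)$ nonlinearity through the factor $\|\rho'\|_\infty/\epsilon_0$, and the second stage (realizing $\xi=f^{-1}\circ g$, a diffeomorphism of $\Pi_{p,r}$ that is $C^1$-close to the identity and supported in $B[p,\epsilon_0]\cap\Pi_{p,r}$ with $\epsilon_0<r/2$, as the holonomy of a vector-field perturbation supported in the flow box) is precisely the content of the paper's Lemma 2.5, quoted from Pugh--Robinson; you could simply invoke that lemma instead of rebuilding the isotopy/suspension construction. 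The one point where your sketch is thinner than the references is the regularity bookkeeping in the second stage: the flow-box chart $\Phi(s,w)=X_{s\tau(w)}(w)$ is only a $C^1$ embedding when $X$ is merely $C^1$, so naively pushing the horizontal field $\tau(w)^{-1}V_s(w)$ forward through $\Phi$ produces an a priori only continuous vector field on $M$; the cited proofs get around this by working through the exponential map in a fixed smooth chart (or by first smoothing $X$ near the orbit segment), and you would need to say a word about this to land the perturbed field back in $\mathfrak{X}^1(M)$ with small $C^1$ distance to $X$. Apart from that technical step, your argument gives a self-contained proof where the paper offers only a reference.
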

\begin{proof}
See Lemma $1.3$ in \cite{N}.
\end{proof}

\begin{lemma}\label{2}
Let $X \in \mathfrak{X}^1(M)$ have no singularities. Suppose that $X_t(x) \neq x$ for $0< t \leq {t_0},$ and let $f : \Pi_{x,{r_0}} \to \Pi_{x'}$ $(x'=X_{t_0}(x))$ be the Poincar\'{e} map $({r_0}>0$ is sufficiently small.$)$ Then, for every $C^1$ neighborhood $\mathcal{U} \subset \mathfrak{X}^1(M)$ of $X$ and $0<r \leq {r_0},$ there is $\epsilon >0$ with the property that for any $\xi \in {\mathcal{N}_\epsilon}(\Pi_{x,r}),$ there exists $Y \in \mathcal{U}$ satisfying
\begin{align*}
& \left\{ \begin{array}{ll}
Y(y)=X(y), & \textrm{{\rm if} $y \notin {F_x}(X_t,r,{t_0})$}\\
f_Y(y)=f \circ \xi(y), & \textrm{{\rm if} $y \in \Pi_{x,r}.$}
\end{array} \right.
\end{align*}
Here $f_Y : \Pi_{x,r} \to \Pi_{x'}$ is the Poincar\'{e} map defined by $Y_t.$
\end{lemma}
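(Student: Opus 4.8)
The plan is to realise the prescribed perturbation of the Poincar\'e map as a ``holonomy shear'' supported inside the flow box $F_x(X_t,r,t_0)$. First I would trivialise the flow box: since $X_t(x)\neq x$ for $0<t\leq t_0$ and $r_0$ is small, the embedding property stated just before the lemma produces a $C^1$ diffeomorphism
\[
\Psi\colon \Pi_{x,r}\times[0,1]\to F_x(X_t,r,t_0),\qquad \Psi(y,s)=X_{s\,\tau(y)}(y),
\]
which carries $\Pi_{x,r}\times\{0\}$ onto $\Pi_{x,r}$, carries $\Pi_{x,r}\times\{1\}$ onto $f(\Pi_{x,r})\subset\Pi_{x'}$, and satisfies $\Psi^{*}X=\mu(y)\,\partial/\partial s$ for the fixed positive $C^1$ function $\mu(y)=1/\tau(y)$ (here $\tau>0$ on $\Pi_{x,r}$ because $\tau(x)=t_0>0$). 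In this chart the flow of $X$ is the rescaled vertical flow, and its first-return holonomy from the bottom face to the top face is $f$, read through $\Psi$.

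Next I would turn $\xi$ into an isotopy that is ``vertical at both ends''. Given $\xi\in\mathcal N_\epsilon(\Pi_{x,r})$, the straight-line homotopy in the exponential chart,
\[
H_u(y)=\exp_x\!\bigl((1-u)\exp_x^{-1}(y)+u\,\exp_x^{-1}(\xi(y))\bigr),\qquad u\in[0,1],
\]
consists, for $\epsilon$ small, of diffeomorphisms of $\Pi_{x,r}$ that equal $1_d$ on a neighbourhood of $\partial\Pi_{x,r}$ (indeed wherever $\xi=1_d$), with $H_0=1_d$, $H_1=\xi$, and $d_{C^1}(H_u,1_d)\leq C\epsilon$ uniformly in $u$; convexity of $N_x(r)$ keeps $H_u(\Pi_{x,r})\subseteq\Pi_{x,r}$, and being the identity near $\partial\Pi_{x,r}$ forces $H_u$ onto. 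Fixing a smooth $\phi\colon[0,1]\to[0,1]$ with $\phi\equiv0$ on $[0,1/3]$ and $\phi\equiv1$ on $[2/3,1]$, I set $\zeta_s=H_{\phi(s)}$ and $\Phi(y,s)=(\zeta_s(y),s)$ on $\Pi_{x,r}\times[0,1]$. Then $\Phi$ is a $C^1$ diffeomorphism with $d_{C^1}(\Phi,1_d)\leq C'\epsilon$, it equals $1_d$ for $s\leq1/3$ and near $\partial\Pi_{x,r}$, and it equals $\xi\times 1_d$ for $s\geq2/3$, hence is $s$-independent there. Consequently $Z:=\Phi_{*}(\partial/\partial s)$ coincides with $\partial/\partial s$ on an open neighbourhood of the whole boundary of $\Pi_{x,r}\times[0,1]$, is $C^1$-close to $\partial/\partial s$, and its integral curve through $(y,0)$ is $s\mapsto(\zeta_s(y),s)$, which exits the top face at $(\xi(y),1)$.

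Finally I would transport and glue: set $Y:=X$ on $M\setminus F_x(X_t,r,t_0)$ and $Y:=\Psi_{*}(\mu\,Z)$ on $F_x(X_t,r,t_0)$. Since $\mu Z=\mu\,\partial/\partial s=\Psi^{*}X$ near the boundary of the box, the two formulas agree there, so $Y$ is a well-defined $C^1$ vector field on $M$; and $Y-X=\Psi_{*}\bigl(\mu(Z-\partial/\partial s)\bigr)$ has $C^1$-norm at most $C''\epsilon$ with $C''$ depending only on $\Psi$ and $\mu$, so $Y\in\mathcal U$ once $\epsilon$ is chosen small enough (the required $\epsilon$ depending only on $\mathcal U$, $\Psi$, $\mu$). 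Rescaling a vector field by the positive function $\mu$ does not change its trajectories, so the $Y$-orbit of $y\in\Pi_{x,r}$ follows $s\mapsto\Psi(\zeta_s(y),s)$ and first meets $\Pi_{x'}$ at $\Psi(\xi(y),1)=X_{\tau(\xi(y))}(\xi(y))=f(\xi(y))$; hence $f_Y=f\circ\xi$ on $\Pi_{x,r}$, while $Y=X$ off $F_x(X_t,r,t_0)$, as required.

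I expect the real work to be ``$C^1$ bookkeeping'': checking that $H_u$, and therefore $\Phi$ and then $Y$, really are the identity on a full neighbourhood of the relevant parts of the boundary, so that the glued $Y$ is genuinely $C^1$; and verifying that every estimate on $\|Y-X\|_{C^1}$ reduces to $d_{C^1}(\xi,1_d)$ times the fixed, $\epsilon$-independent distortion of $\Psi$, $\mu$ and $\phi$, which is what lets us pick $\epsilon$ depending only on $\mathcal U$. A secondary point, handled by the smallness of $r$, of $t_0$ and of the perturbation, is that the perturbed orbit of $y$ does not cross $\Pi_{x'}$ before leaving the flow box, so that $f_Y$ is indeed the first-return map of $Y_t$.
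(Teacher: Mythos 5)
The paper does not actually prove this lemma: its entire ``proof'' is the citation ``See Remark 2 in \cite{C}'' (Pugh--Robinson's $C^1$ closing lemma paper), so there is no argument in the text to compare yours against line by line. Your proposal reconstructs what that citation stands for, and it is essentially the standard construction: trivialise the flow box by $\Psi(y,s)=X_{s\tau(y)}(y)$, interpolate from $1_d$ to $\xi$ by an isotopy that is constant near both ends and supported away from $\partial\Pi_{x,r}$ (this is where $\mathrm{supp}(\xi)\subset\Pi_{x,r/2}$ is used), push the vertical field through the resulting shear, rescale by $\mu=1/\tau$, and glue to $X$ outside the box. The computation of the new first-return map as $f\circ\xi$ and the matching of $Y$ with $X$ near the boundary of the box are correct.

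The one point you wave at under ``$C^1$ bookkeeping'' that is genuinely delicate, and is the real reason the paper defers to Pugh--Robinson, is regularity: $X$ is only $C^1$, so the flow-box chart $\Psi$ is only $C^1$, and the naive pushforward $Y=\Psi_{*}(\mu Z)$ involves $D\Psi$ composed with $\Psi^{-1}$, which a priori is only continuous; one cannot simply assert that $Y$ is a $C^1$ vector field or that $\|Y-X\|_{C^1}\leq C''\epsilon$ with $C''$ ``depending only on $\Psi$ and $\mu$''. Handling this (e.g.\ by writing the perturbation as a time-dependent vector field in the chart whose $C^1$ size is controlled by $d_{C^1}(\xi,1_d)$ and by first-order data of the flow only, rather than by differentiating $D\Psi$) is the technical content of the cited Remark 2. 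So your route is the right one and morally complete, but to make it a self-contained proof you would need to either assume more smoothness of $X$ or redo the gluing estimate so that no second derivative of the flow appears.
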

\begin{proof}
See Remark $2$ in \cite{C}.
\end{proof}

We say that $X \in \mathfrak{X}^1(M)$ is {\it $C^1$ robustly kinematic $N$-expansive} if there is a $C^1$ neighborhood $\mathcal{U}$ of $X$ such that every $Y \in \mathcal{U}$ is $N$-expansive. A vector field $X \in \mathfrak{X}^1(M)$ is called a {\it star vector field} (or {\it star flow}), denoted by $X \in \mathfrak{X}^*(M),$ if $X$ has a $C^1$-neighborhood $\mathcal{U}$ in $\mathfrak{X}^1(M)$ such that every singularity and every periodic orbit of $Y \in \mathcal{U}$ is hyperbolic. In \cite{G}, let $\mathfrak{X}^*(M)=\mathfrak{X}^1(M) \setminus {\rm Sing}(X),$ for $X \in \mathfrak{X}^*(M)$ if and only if $X$ satisfies both Axiom $A$ and no-cycle condition. Then we have the following lemma.

\begin{lemma}\label{l1}
If a vector field $X$ on $M$ is $C^1$ robustly kinematic $N$-expansive then $X \in \mathfrak{X}^*(M).$
\end{lemma}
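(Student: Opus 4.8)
The plan is to run the standard star-flow argument on a cross-section, using the Poincar\'e-map perturbation Lemmas \ref{1} and \ref{2}. Since $X$ is $C^{1}$ robustly kinematic $N$-expansive, fix a $C^{1}$ neighborhood $\mathcal{U}$ of $X$ in which every vector field is kinematic $N$-expansive. By Lemma \ref{sing}, $\mathrm{Sing}(Y)=\emptyset$ for every $Y\in\mathcal{U}$, so it remains only to show that every periodic orbit of every $Y\in\mathcal{U}$ is hyperbolic; once this is done, $\mathcal{U}$ itself witnesses $X\in\mathfrak{X}^{*}(M)$. I would argue by contradiction: suppose some $Y\in\mathcal{U}$ has a non-hyperbolic periodic orbit $\gamma\in P(Y)$, say $Y_{T}(p)=p$ with $p\in\gamma$, and let $f=f_{p,T}\colon\Pi_{p,r_{0}}\to\Pi_{p}$ be its Poincar\'e map, so that $D_{p}f=\Psi_{T}|_{N_{p}}$ has an eigenvalue $\lambda$ with $|\lambda|=1$.

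Next I would move this eigenvalue onto a root of unity and linearize the return map. Since $D_{p}f$ has a unit-modulus eigenvalue, I can choose a linear isomorphism $\mathcal{O}\colon N_{p}\to N_{p}$ with $\|\mathcal{O}-D_{p}f\|$ as small as desired together with an $\mathcal{O}$-invariant subspace $V\subset N_{p}$, $\dim V\in\{1,2\}$, on which $\mathcal{O}$ has finite order, say $\mathcal{O}^{b}|_{V}=\mathrm{id}$ (no perturbation of the linear part is needed when $\lambda=\pm 1$, with $b=1$ or $2$; otherwise one replaces $\arg(\lambda)/2\pi$ by a nearby rational $a/b$). Feeding $\mathcal{O}$ into Lemma \ref{1} yields $Y'\in\mathcal{U}$ keeping $\gamma$ periodic and whose Poincar\'e map $g$ coincides with $\exp_{p}\circ\mathcal{O}\circ\exp_{p}^{-1}$ on $B[p,\epsilon_{0}/4]\cap\Pi_{p,r}$. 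Taking a small enough disk $\Delta\ni p$ in $\Pi_{p,r}$ tangent to $V$, the relation $\mathcal{O}^{b}|_{V}=\mathrm{id}$ forces every point of $\Delta$ to be a fixed point of the $b$-th return map $g^{b}$, hence to lie on a periodic orbit of $Y'$ close to $\gamma$. Thus $Y'$ carries a nondegenerate continuum $\Delta$ of periodic orbits accumulating on $\gamma$.

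Finally I would extract the contradiction. Fix the $\delta>0$ that kinematic $N$-expansiveness of $Y'$ assigns to some small $\epsilon$, and carry out the construction with $\Delta$ so small that $\mathrm{diam}(\Delta)<\delta$. For every $q\in\Delta$ one has $g^{b}(q)=q$ and $g^{b}(p)=p$, so the $g^{b}$-orbits of $q$ and of $p$ stay within $\mathrm{diam}(\Delta)<\delta$ of each other under all iterates; carrying this through the flow box along $\gamma$ --- after reparametrizing time so that the successive return instants of $q$ and of $p$ are matched --- gives $\Delta\subset\Gamma_{\delta}^{Y'_{t}}(q_{0})$ for any $q_{0}\in\Delta$. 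Since a nondegenerate continuum meets infinitely many distinct orbits, this dynamical $\delta$-ball has more than $N$ points for every such $\delta$, contradicting $Y'\in\mathcal{U}$. Hence no $Y\in\mathcal{U}$ admits a non-hyperbolic periodic orbit, and together with $\mathrm{Sing}(Y)=\emptyset$ this gives $X\in\mathfrak{X}^{*}(M)$.

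\textbf{Main obstacle.} The point requiring real care is this last conversion --- from ``$q$ and $p$ stay close under every power of the cross-section diffeomorphism $g^{b}$'' to ``their $Y'$-orbits stay $\delta$-close for all real time'' --- because the nearby periodic orbits created by the perturbation need not share a common period, and the flow may shear matched orbit arcs out of phase. This is exactly where the \emph{kinematic}/Poincar\'e-map set-up pays off: a continuum of fixed points of $g^{b}$ already destroys $N$-expansiveness of $g^{b}$ as a diffeomorphism of the cross-section, and a single reparametrization $h\in\mathcal{H}$ (with Lemma \ref{2} at hand to normalize the return map if necessary) transports the obstruction back to the flow. This is the flow analogue of the argument of Lee \cite{ML} for $N$-expansive diffeomorphisms and of Moriyasu {\it et al.} \cite{S} for robustly expansive flows.
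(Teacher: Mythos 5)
Your proposal follows essentially the same route as the paper: assume a non-hyperbolic periodic orbit survives in the robust neighborhood $\mathcal{U}$, feed a nearby linear isomorphism $\mathcal{O}$ into Lemma \ref{1} to normalize the Poincar\'e return map near $p$, produce a nondegenerate continuum of periodic points accumulating on the orbit, and conclude that a dynamical $\delta$-ball is infinite, contradicting kinematic $N$-expansiveness. If anything, your version is more careful than the paper's, which simply takes the unit-modulus eigenvalue to be $1$ (rather than treating a complex $\lambda$ via a root of unity and the $b$-th return map) and does not discuss the passage from closeness of $g$-orbits on the cross-section to $\delta$-closeness of the flow orbits for all real time, which you correctly flag as the delicate step.
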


\begin{proof}
Let $X$ be $C^1$ robustly kinematic $N$-expansive. Suppose that $X \notin \mathfrak{X}^*(M).$ Then for a $C^1$ neighborhood $\mathcal{U}$ of $X,$ there are $Y \in \mathcal{U}$ and a non-hyperbolic periodic point $p$ of $Y.$

Let $T>0$ be the period of $p,$ and let  $f : \Pi_{p,r_0}\rightarrow \Pi_p \;\; ({\rm for\;} r_0>0)$ be the Poincar\'{e} map of $Y_t$ at $p$. Since $p$ is a non-hyperbolic fixed point of $f$ there exists an eigenvalue $\lambda$ of ${D_p}f$ with $|\lambda|=1.$ Let $\delta_0 >0$ and $0< \epsilon_0 < r_0$ be given by Lemma \ref{1} for $\mathcal{U}$ and $r_0.$ Then for the linear isomorphism $\mathcal{O} : N_p \rightarrow N_p,$ there exists $Z \in \mathcal{U}$ such that
\begin{align*}
Z(x)&=Y(x), \;\; {\rm if}\; x \notin F_p(Y_t, r_0, \frac{T}{2}),\\
g(x)&= \left\{ \begin{array}{ll}
{\rm exp}_p \circ \mathcal{O} \circ {\rm exp}^{-1}_p(x), & \textrm{if $x \in B[p,{\frac{\epsilon_0}{4}}]  \cap \Pi_{p,r_0}$}\\
f(x) & \textrm{if $x \notin  B[p,{\epsilon_0}] \cap \Pi_{p,r_0}$}.\\
\end{array} \right.
\end{align*}
Here $g$ is the Poincar\'{e} map associated to $Z.$ Since the eigenvalue $\lambda$ of ${D_p}g$ is $1,$ we can take a non-zero-vector $v$ associated to $\lambda$ such that $\|v\|\leq\frac{\epsilon_0}{4}$ and ${\rm exp}_p(v) \in B[p,{\frac{\epsilon_0}{4}}].$ Then $$g({\rm exp}_p(v))={\rm exp}_p \circ {D_p}f \circ {\rm exp}^{-1}_p({\rm exp}_p(v))={\rm exp}_p(v).$$
Put $I_v=\{\eta  v : 0< \eta < {\frac{\epsilon_0}{8}}\}$ and ${\rm exp}_p(I_v)=\mathcal{I}_p$. Then $\mathcal{I}_p$ is an invariant small arc such that
$\mathcal{I}_p \subset B[p,{\epsilon_0}] \cap \Pi_{p,{r_0}}~ {\rm and}~ g(x)=x \ (x \in \mathcal{I}_p).$ So $Z_T(\mathcal{I}_p)=\mathcal{I}_p,$ where $Z_T$ is the time $T$-map of the flow $Z_t.$

Since $Z_T$ is the identity on ${\mathcal{I}_p}$, $Z_T$ is not $N$-expansive. In fact, there exists $\delta>0$ such that
\begin{align*}
\Gamma_{\delta}^{Z_T}(p)&=\{ x \in \mathcal{I}_p : d(Z_T(x), Z_T(p)) \leq \delta \} \\
&=\{ x \in \mathcal{I}_p : d(x, p) \leq \delta\}
\end{align*}
Then we obtain that $\infty < \#\mathcal{I}_p < \#\Gamma_{\delta}^{Z_T}(x_0)\; {\rm for \; all} \; x \in \mathcal{I}_p.$ This means that ${Z_T}$ is not kinematic $N$-expansive. This contradicts the fact that $Z \in \mathcal{U}.$
\end{proof}

\begin{proposition}
If a vector field $X$ on $M$ is $C^1$ robustly kinematic $N$-expansive then $X$ satisfies Axiom A.
\end{proposition}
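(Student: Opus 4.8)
The plan is to deduce the Proposition almost directly from Lemma~\ref{l1} together with the structural characterization of nonsingular star flows recalled just before that lemma. First I would observe that a $C^1$ robustly kinematic $N$-expansive $X$ is, in particular, kinematic $N$-expansive, so Lemma~\ref{sing} gives ${\rm Sing}(X)=\emptyset$; and since every $Y$ in the relevant $C^1$ neighborhood $\mathcal{U}$ of $X$ is again kinematic $N$-expansive, each such $Y$ is also nonsingular. Thus $X$ lies in the $C^1$ interior of the set of nonsingular vector fields.

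Next I would invoke Lemma~\ref{l1} to conclude $X\in\mathfrak{X}^*(M)$, i.e.\ $X$ is a nonsingular star flow: every periodic orbit of every $Y\in\mathcal{U}$ is hyperbolic, and there are no singularities to account for. Then, by the characterization from \cite{G} quoted above (nonsingular star flow $\Longleftrightarrow$ Axiom A plus the no-cycle condition), it follows that $\Omega(X)$ is hyperbolic and $P(X)$ is dense in $\Omega(X)\setminus{\rm Sing}(X)=\Omega(X)$; hence $X$ satisfies Axiom A (and, as a bonus, the no-cycle condition). This finishes the argument.

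\textbf{Where the real work sits.} At the level of this paper the proof is short precisely because the deep step has been outsourced to \cite{G}. If one wanted a self-contained treatment, the hard part would be promoting the \emph{uniform} hyperbolicity of the dense family of periodic orbits (furnished by the star condition via Lemmas~\ref{1} and~\ref{2}, in the spirit of the Franks/Mañé perturbation technique) to uniform hyperbolicity of the entire nonwandering set $\Omega(X)$. This is a Liao/Mañé ergodic-closing-lemma type argument carried out at the level of the linear Poincar\'e flow $\Psi_t$ on the normal bundle $\mathcal{N}$, and the essential simplification here is that ${\rm Sing}(X)=\emptyset$ (Lemma~\ref{sing}), which keeps $\Psi_t$ well defined and uniformly controlled along all of $\Omega(X)$ and removes the usual obstruction of regular orbits accumulating on singularities.
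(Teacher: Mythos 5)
Your argument is correct and is essentially the paper's own proof: the paper also deduces the Proposition directly from Lemma~\ref{l1} (that $X$ is a nonsingular star flow) combined with the Gan--Wen characterization in \cite{G} of nonsingular star flows as exactly the Axiom A, no-cycle vector fields. Your additional remarks on nonsingularity via Lemma~\ref{sing} and on where the deep work resides are accurate but not needed beyond what the paper states.
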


\begin{proof}
By the Lemma \ref{l1} and reference \cite{G}, we can show that easily.
\end{proof}

\begin{lemma}\label{l2}
If a vector field $X$ on $M$ is $C^1$ robustly kinematic $N$-expansive then $X$ satisfies the quasi-transversality condition.
\end{lemma}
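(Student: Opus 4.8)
The plan is to argue by contradiction, exploiting the structure already established: by the previous proposition $X$ satisfies Axiom~A, so $\Omega(X)$ splits into finitely many basic sets, and if the quasi-transversality condition fails there is a point $q\in M$ with a nonzero vector $v\in T_qW^s(q)\cap T_qW^u(q)$. I would first normalize: such a $q$ lies in the stable manifold of some basic set $\Lambda_i$ and the unstable manifold of some basic set $\Lambda_j$ (possibly $i=j$), and after flowing we may assume $q$ itself is chosen so that a whole open piece of a $C^1$ disk $D$ through $q$, tangent to $v$ at $q$, lies simultaneously inside $W^s(\Lambda_i)$ and arbitrarily $C^1$-close to being inside $W^u(\Lambda_j)$. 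The informal content of ``$T_qW^s\cap T_qW^u\neq\{0\}$'' is that the stable and unstable foliations have a tangency, and the standard M\~an\'e-type move is to use this tangency to create, by a small $C^1$ perturbation $Y$ of $X$ supported in a flow box away from the singularities (there are none, by Lemma~\ref{sing}), a nontrivial arc whose forward and backward iterates stay in a prescribed $\delta$-tube.

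Concretely, I would proceed as follows. Fix an arbitrary $\epsilon>0$ and the corresponding $\delta>0$ from the definition of kinematic $N$-expansiveness, applied to every $Y$ in the $C^1$ neighborhood $\mathcal U$ witnessing robustness. Choose a flow box (a Poincar\'e section $\Pi_{x,r}$ transverse to the orbit through a regular point $x$ near $q$) in which both the local stable manifold of $\Lambda_i$ and the local unstable manifold of $\Lambda_j$ appear as $C^1$ graphs whose tangent spaces meet nontrivially at the point corresponding to $q$. Using Lemma~\ref{2} (the perturbation lemma allowing any $\xi\in\mathcal N_\epsilon(\Pi_{x,r})$ to be realized by a nearby flow $Y\in\mathcal U$), I would select $\xi$ to be a $C^1$-small diffeomorphism of the section that bends the image of $W^u_{loc}(\Lambda_j)$ so that it contains a nontrivial arc lying inside $W^s_{loc}(\Lambda_i)$; the tangency condition is exactly what makes such a bending $C^1$-small. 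For the resulting $Y$, this arc $\mathcal J$ consists of points that are all forward asymptotic (they are in $W^s(\Lambda_i)$) and backward asymptotic (they are in $W^u(\Lambda_j)$), hence their whole $Y_t$-orbits stay within a uniformly bounded tube; by refining the size of $\mathcal J$ one forces $d(Y_t(y),Y_t(z))\le\delta$ for all $t\in\mathbb R$ and all $y,z\in\mathcal J$, so $\Gamma_\delta^{Y_t}(y)\supseteq\mathcal J$ is infinite, contradicting that $Y$ is kinematic $N$-expansive.

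The main obstacle I anticipate is making the ``bending a tangency into an inclusion'' step honest while keeping the perturbation $C^1$-small and, simultaneously, keeping control of the \emph{entire} orbit of the produced arc for all $t\in\mathbb R$, not just its local behavior in the flow box. Two issues intertwine here: first, the arc $\mathcal J$ must genuinely remain in $W^s(\Lambda_i)\cap W^u(\Lambda_j)$ for the perturbed flow $Y$ (so one must check the perturbation does not destroy the relevant invariant manifolds outside the box, which is where supp$(\xi)\subset\Pi_{x,r/2}$ and Lemma~\ref{2}(i) are used), and second, even granting $\mathcal J\subset W^s\cap W^u$, the orbits through $\mathcal J$ a priori only stay $\delta$-close for all sufficiently large $|t|$; controlling the finite-time middle stretch requires shrinking $\mathcal J$ using uniform continuity of the flow on the compact manifold $M$. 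A clean way to handle the second point is to work with $\delta$ first (fix $\epsilon$, get $\delta$), then choose the arc small enough relative to $\delta$ and to the contraction/expansion rates on $\Lambda_i,\Lambda_j$; I would flag this ordering of quantifiers carefully. An alternative and perhaps cleaner route, if one wants to avoid the non-compact-time bookkeeping, is to push $q$ onto (or arbitrarily near) a periodic orbit using Axiom~A density and then invoke Lemma~\ref{1} to create the tangency-driven arc directly at the periodic point, so that the produced arc is \emph{invariant} under the period map and its orbit-tube boundedness is automatic; this mirrors the mechanism already used in the proof of Lemma~\ref{l1}.
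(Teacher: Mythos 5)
Your proposal follows essentially the same route as the paper: argue by contradiction, use Axiom~A (from the preceding proposition) to place the tangency point in $W^s(\Lambda_i)\cap W^u(\Lambda_j)$, apply the Pugh--Robinson-type perturbation of Lemma~\ref{2} to convert the tangency into a nontrivial arc contained in $W^s\cap W^u$ for a nearby flow $Y$, and conclude that this arc sits inside a single dynamical $\delta$-ball, violating kinematic $N$-expansiveness. In fact you are more careful than the paper's own (quite terse) proof about the two delicate points --- keeping the invariant manifolds intact outside the support of the perturbation and controlling the finite-time ``middle stretch'' of the orbits through the arc --- so the plan is sound and matches the intended argument.
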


\begin{proof}
It is enough to show that if the flow $X_t$ of $X \in \mathfrak{X}^1(M)$ is $C^1$ robustly kinematic $N$-expansive then $X$ satisfies the quasi-transversality condition by applying Theorem (\cite{S}) and Lemma \ref{l1}. Suppose that $X$ does not satisfy the quasi-transversality condition. Then there exists $x \in M$ such that $${T_x{W^s}}(x) \cap {T_x{W^u}}(x) \neq \{{\bf 0}_x\},$$ and so we have $x \notin \Omega(X_t).$ Since $X$ satisfies the Axiom $A$, there exists a unique decomposition $\Omega(X_t)=\bigcup_{1\leq i \leq m}\Omega_i$ of
$\Omega(X_t)$ by basic sets $\Omega_i$. Then we get
\begin{equation*}
M=\bigcup_{1\leq i \leq m} W^s(\Omega_i)=\bigcup_{1\leq i \leq m} W^u(\Omega_i).
\end{equation*}
Since ${T_x{W^s}}(x) \cap {T_x{W^u}}(x) \neq \{{\bf 0}_x\},$ with a small $C^1$ perturbation of $X$ at $x$ by Lemma 2.6, we can construct $Y$ and an arc $\mathcal{L}_x$ centered at $x$ such that
\begin{center}
$\mathcal{L}_x\subset W^s(y,Y_t)\cap W^u(z,Y_t)$ and $y,z \in \Omega(Y_t).$
\end{center}
\noindent Finally we obtain $B_{\delta_1}(x)=B_{\delta_1}(x) \cap \mathcal{L}_x>N,$ and so $\Gamma_\delta^{Y_{t_1}}(x)>N.$ This implies that $Y_{t_1}$ is not kinematic $N$-expansive. The contradiction completes the proof.
\end{proof}

It is easy to see that if $X \in \mathfrak{X}^1(M)$ has no singularities and satisfies both Axiom $A$ and the quasi-transversality condition, then $X$ is quasi-Anosov by definition. Then we have the following theorem.\\

\noindent{\bf Theorem 1.}\label{main}
{\it If a vector field $X$ on $M$ is $C^1$ robustly kinematic $N$-expansive then $X$ satisfies quasi-Anosov.}\\

And we can see the following corollary. \\

\noindent{\bf Corollary.}\label{coro}
{\it If a vector field $X$ on $M$ is $C^1$ robustly kinematic expansive then $X$ satisfies quasi-Anosov.}\\


\section{$N$-expansiveness for sub-dynamical systems}
\subsection{$N$-expansiveness for chain recurrent sets}
\  \

\noindent In this section, we consider the kinematic $N$-expansiveness of sub-dynamical systems, such as chain recurrent sets and homoclinic classes. First of all we shall describe some definitions.

\begin{definition}
{\rm Let $\Lambda$ be a closed $X_t$-invariant set. We say that $\Lambda$ is $C^1$ robustly kinematic $N$-expansive for $X$ if there exist $\mathcal{U}$ of $X$ and $U$ of $\Lambda$ such that for every $Y \in \mathcal{U},$ $$\Lambda_Y(U)=\bigcap\limits_{t \in \mathbb{R}} Y_t(U)$$ is kinematic $N$-expansive for $Y.$ }
\end{definition}

A sequence $\{(x_i, t_i) : x_i \in M ; t_i \geq 1; a<i<b\}(-\infty \leq a < b \leq \infty)$ is called a {\it $\delta$-pseudo orbit} if a {\it $\delta$-chain} of $X_t$ if for any $a< i < b-1,$ $d(X_{t_i}(x_i), x_{i+1}) < \delta.$ A point $x \in M$ is called {\it chain recurrent} if for any $\delta >0,$ there exists a $\delta$-pseudo orbit $\{(x_i, t_i) : 0 \leq i <n\}$ with $n>1$ such that $x_0=x$ and $d(X_{t_{n-1}}(x_{n-1}), x) < \delta.$ The set of all chain recurrent points of $X_t$ is called the {\it chain recurrent set} of $X_t,$ denoted by $\mathcal{CR}(X).$ It is easy to see that this set is closed and $X_t$-invariant.\\

\noindent{\bf Theorem 2.}\label{chain}
{\it The chain recurrent set $\mathcal{CR}(X)$ is $C^1$ robustly kinematic $N$-expansive if and only if $\mathcal{CR}(X)$ is Axiom A without cycles.}

\begin{proof}
Let $\mathcal{CR}(X)=\Lambda,$ for convenience. First of all, if $\Lambda$ is Axiom A without cycles then it is expansive, also, satisfies kinematic $N$-expansiveness. Now we prove that {\it "only"} part.

We use the fact that $X \in \mathfrak{X}^*(M)$ is equivalently Axiom A without cycles by the results of \cite{G}. It is enough to show that $X \in \mathfrak{X}^*(M).$ Suppose that $X \notin \mathfrak{X}^*(M).$ Then for a $C^1$ neighborhood $\mathcal{U}$ of $X,$ there are $Y \in \mathcal{U}$ and a non-hyperbolic periodic point $p$ of $Y.$ As the proof of Lemma \ref{l1}, we construct an arc $\mathcal{I}_p$ which is invariant and periodic for $Z_T.$

Since $\Lambda$ is $C^1$ robustly kinematic $N$-expansive, there are $\mathcal{V} \in \mathcal{U}$ and $U$ of $\Lambda$ such that for any $Z \in \mathcal{V},$ $\Lambda_Z(U)=\cap_{t \in \mathbb{R}} Z_t(U)$ is kinematic $N$-expansive. Clearly, $\mathcal{I}_p \subset \mathcal{CR}(Z) \subset {\Lambda}_Z(U).$ So, ${\Lambda}_Z(U)$ does not satisfy kinematic $N$-expansiveness which is a contradiction. Therefore, we complete the proof.
\end{proof}

\subsection{$N$-expansiveness for homoclinic classes}

\  \

Homoclinic classes are natural candidates to replace the Smale's hyperbolic basic set in non-hyperbolic theory of dynamical systems. The relationship of expansiveness and hyperbolicity of homoclinic classes has been discussed in \cite{P, PP, SV, D}. So in this direction, we consider the homoclinic classes with Kinematic N-expanisve property.\\

Let $\gamma$ be a hyperbolic closed orbit $\gamma,$ the sets
\begin{align*}
W^s(\gamma)=&\;\{ x \in M : X_t(x) \to \gamma \;\;{\rm as}\;\; t \to \infty\}\;\; {\rm and} \\
\;W^u(\gamma)=&\;\{x \in M : X_t(x)\to \gamma\;\; {\rm as}\;\;{t \to {-\infty}}\}
\end{align*}
are said to be the {\it stable manifold} and {\it unstable manifold} of $\gamma,$ respectively. We say that the dimension of the stable manifold $W^s(\gamma)$ of $\gamma$ is the {\it index} of $\gamma,$ and denoted by $ind(\gamma).$ The {\it homoclinic class} of $X_t$ associated $\gamma,$ denoted by $H_X(\gamma),$ is defined as the closure of the transversal intersection of the stable and unstable manifolds of $\gamma,$ that is;
$$H_X(\gamma)=\overline{W^s(\gamma) \pitchfork W^u(\gamma)},$$ where $W^s(\gamma)$ is the stable manifold of $\gamma$ and $W^u(\gamma)$ is the unstable manifold of $\gamma.$

For two hyperbolic closed orbits $\gamma_1$ and $\gamma_2$ of $X_t,$ we say $\gamma_1$ and $\gamma_2$ are {\it homoclinically related}, denoted by $\gamma_1 \sim \gamma_2$, if $W^s(\gamma_1) \pitchfork W^u(\gamma_2) \neq \emptyset$ and $W^s(\gamma_2) \pitchfork W^u(\gamma_1) \neq \emptyset.$ When $\gamma_1$ and $\gamma_2$ are homoclinically related, their indices must be the same. By Smale's Theorem, it is well known that $$ H_X(\gamma)=\overline{\{ {\gamma}' : \gamma' \sim \gamma\}}.$$\\

\noindent{\bf Theorem 3.}\label{homo}
{\it If the homoclinic class $H_X(\gamma)$ is $C^1$ robustly kinematic $N$-expansive then $H_X(\gamma)$ is hyperbolic.}\\

Before proving the Theorem 3, we will see that a homoclinic class of kinematic $N$-expansive flow does not have singularities.

\begin{lemma}\label{hsing}
Let $\gamma$ be a hyperbolic closed orbit of $X_t.$ If the homoclinic class $H_X(\gamma)$ is kinematic $N$-expansive and $\sigma \in H_X(\gamma) \cap {\rm Sing}(X)$ then $\sigma$ is isolated.
\end{lemma}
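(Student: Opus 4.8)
The plan is to argue by contradiction, in the same spirit as Lemma \ref{sing}: if $\sigma \in H_X(\gamma)\cap{\rm Sing}(X)$ is \emph{not} isolated in ${\rm Sing}(X)$, then there is a sequence of singularities $\sigma_n \to \sigma$ with $\sigma_n\neq\sigma$. Since each $\sigma_n$ is a fixed point of the flow, for every $t\in\mathbb R$ we have $X_t(\sigma_n)=\sigma_n$ and $X_t(\sigma)=\sigma$, hence $d(X_t(\sigma),X_t(\sigma_n))=d(\sigma,\sigma_n)$ for all $t$. Thus, given any $\varepsilon>0$, choosing $\delta>0$ as in the definition of kinematic $N$-expansiveness applied to $X|_{H_X(\gamma)}$ (note $H_X(\gamma)$ is closed and $X_t$-invariant, so the restricted flow makes sense) and then picking $N$ distinct indices $n_1,\dots,n_N$ with $d(\sigma,\sigma_{n_j})\le\delta$, we get $\sigma_{n_1},\dots,\sigma_{n_N}\in\Gamma_\delta(\sigma)$ together with $\sigma$ itself. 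This forces $\#\Gamma_\delta(\sigma)\ge N+1$, contradicting kinematic $N$-expansiveness. Hence $\sigma$ must be isolated in ${\rm Sing}(X)$.

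Alternatively, and perhaps more in line with the earlier lemmas, I would deduce non-isolation from Lemma \ref{disconn}: if $\sigma$ is a non-isolated singularity of the restricted system, one can take $x$ near $\sigma$ inside $H_X(\gamma)$ and a small arc $\mathcal I$ joining $x$ and $\sigma$ lying in (or close to) $H_X(\gamma)$; every point of $\mathcal I$ stays within $\delta$ of $\sigma$ under the flow because $\sigma$ is fixed, so $\Gamma_\delta(\sigma)\supset\mathcal I$ is uncountable, again contradicting kinematic $N$-expansiveness (which, by the Remark, forces $\Gamma_\delta$ to be at most countable). Either route reduces the statement to the observation already exploited twice in Section~2: fixed points of the flow cannot be shadowed-apart, so an accumulation of singularities destroys $N$-expansiveness.

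The only genuine subtlety is bookkeeping about where the perturbing points live: kinematic $N$-expansiveness of $H_X(\gamma)$ is a statement about the \emph{intrinsic} dynamics of $X_t|_{H_X(\gamma)}$, so the points witnessing $\#\Gamma_\delta(\sigma)>N$ must themselves belong to $H_X(\gamma)$. I would handle this by noting that $H_X(\gamma)$ is a perfect set whenever it is not totally disconnected — or, if one wants a cleaner argument, by first invoking Lemma \ref{disconn} applied to the restricted flow to conclude $H_X(\gamma)$ is totally disconnected, so a non-isolated singularity $\sigma$ would be accumulated by \emph{other} points of $H_X(\gamma)$; among these accumulating points, if infinitely many are themselves singularities we are done as above, and if not, one still produces arbitrarily many distinct orbit pieces trapped in a $\delta$-ball around the fixed point $\sigma$, since staying near a fixed point for all time is automatic once one starts close enough. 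The main obstacle is thus purely the verification that the witnesses lie in $H_X(\gamma)$; the dynamical content is immediate from $X_t(\sigma)=\sigma$.
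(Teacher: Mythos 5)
There is a genuine gap. The load-bearing claim in your second and third routes --- that ``staying near a fixed point for all time is automatic once one starts close enough,'' so that every point of a small arc through $\sigma$ (or every point of $H_X(\gamma)$ near $\sigma$) lies in $\Gamma_\delta(\sigma)$ --- is false. A point $y\neq\sigma$ near a singularity is not itself fixed, and its forward or backward orbit will in general leave any small ball around $\sigma$ (take $\sigma$ to be a hyperbolic saddle: every point off the local stable manifold escapes). So with the unreparametrized ball $\Gamma_\delta(\sigma)=\{y: d(X_t(\sigma),X_t(y))\le\delta\ \forall t\}$ you have not exhibited $N+1$ elements, and the appeal to Lemma \ref{disconn} does not go through. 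Your first route is correct as far as it goes ($d(X_t(\sigma),X_t(\sigma_n))=d(\sigma,\sigma_n)$ is constant when both points are fixed), but it only rules out accumulation of $\sigma$ by \emph{other singularities}, whereas the lemma as the paper proves it asserts that $\sigma$ is isolated in $H_X(\gamma)$, i.e.\ not accumulated by arbitrary points of the homoclinic class. That stronger statement is exactly the case your fixed-point trick cannot reach.

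The paper's own proof avoids this by using the \emph{reparametrized} dynamical ball $\Gamma_\delta^{X_t}(\sigma)$ and choosing $h\equiv 0\in\mathcal{H}$: then $X_{h(t)}(x)=x$ for all $t$, so $d(X_t(\sigma),X_{h(t)}(x))=d(\sigma,x)$ is constant, and \emph{every} $x\in H_X(\gamma)$ with $d(\sigma,x)<\delta$ belongs to the ball regardless of how its true orbit behaves; accumulation of $H_X(\gamma)$ at $\sigma$ then yields more than $N$ such points (the paper phrases this as producing an arc and invoking Lemma \ref{disconn}). This is the idea your argument is missing, and it only works because the kinematic notion allows the degenerate reparametrization that freezes the comparison orbit; note that Definition 2.1 as literally written has no reparametrization, so to repair your proof you must first commit to the $h$-version of the dynamical ball, and also verify (as you correctly flag) that the witnessing points lie in $H_X(\gamma)$.
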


\begin{proof}
Suppose that there exists $\sigma \in {\rm Sing}(H_X(\gamma))$ which is not isolated. Let $\epsilon >0$ be given and $\delta >0$ be the corresponding number from the definition of kinematic $N$-expansiveness. And let $x \in H_X(\gamma)$ such that $d(\sigma, x)< \epsilon.$ We can take $h(t)\equiv 0$ then $d(X_t(\sigma), X_{h(t)}(x))= d(\sigma, x)< \delta$ and so, $x \in \{X_t(\sigma) : |t|< \epsilon\}.$ This means that there exists an arc with two endpoints $x$ and $\sigma.$ By Lemma \ref{disconn}, this is a contradiction. Therefore, every singular point of $H_X(\gamma)$ is isolated.
\end{proof}

\begin{lemma}\label{periodic}
Let $\gamma$ be a hyperbolic closed orbit of $X_t.$ If the homoclinic class $H_X(\gamma)$ is $C^1$ robustly kinematic $N$-expansive then for any $\eta \in H_X(\gamma) \cap {\rm P}(X_t)$ is hyperbolic.
\end{lemma}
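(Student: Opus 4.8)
The plan is to argue by contradiction using the $C^1$ robustness, exactly in the spirit of Lemma \ref{l1}, but localized to the homoclinic class. Suppose $H_X(\gamma)$ is $C^1$ robustly kinematic $N$-expansive and that some $\eta \in H_X(\gamma) \cap \mathrm{P}(X_t)$ is not hyperbolic. By definition of $C^1$ robust kinematic $N$-expansiveness for $\Lambda = H_X(\gamma)$, there are a $C^1$ neighborhood $\mathcal{U}$ of $X$ and a neighborhood $U$ of $H_X(\gamma)$ such that for every $Y \in \mathcal{U}$ the set $\Lambda_Y(U) = \bigcap_{t \in \mathbb{R}} Y_t(U)$ is kinematic $N$-expansive. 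Since $\eta$ is already a periodic point of $X$ itself, we do not even need a preliminary perturbation to create a non-hyperbolic periodic orbit; we take $p = \eta$, let $T>0$ be its period, and let $f : \Pi_{p,r_0} \to \Pi_p$ be the Poincar\'{e} map of $X_t$ at $p$. Non-hyperbolicity of $p$ gives an eigenvalue $\lambda$ of $D_pf$ with $|\lambda| = 1$.

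Next I would invoke Lemma \ref{1} with this $\mathcal{U}$ and $r_0$: it produces $\delta_0 > 0$ and $0 < \epsilon_0 < r_0$ so that, choosing a linear isomorphism $\mathcal{O} : N_p \to N_p$ with $\|\mathcal{O} - D_pf\| < \delta_0$ and eigenvalue exactly $1$ on the line spanned by an eigenvector $v$ of $\lambda$, there is $Z \in \mathcal{U}$ with $Z = X$ off $F_p(X_t, r_0, T/2)$ and whose Poincar\'{e} map $g$ equals $\exp_p \circ \mathcal{O} \circ \exp_p^{-1}$ on $B[p, \epsilon_0/4] \cap \Pi_{p,r_0}$. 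As in the proof of Lemma \ref{l1}, the segment $\mathcal{I}_p = \exp_p(\{\eta v : 0 < \eta < \epsilon_0/8\})$ is then a nondegenerate arc through $p$ that is fixed pointwise by $g$, hence invariant under the time-$T$ map $Z_T$, with $Z_T|_{\mathcal{I}_p} = \mathrm{id}$. This forces $\#\Gamma_\delta^{Z_t}(p) \geq \#\mathcal{I}_p = \infty > N$ for every $\delta > 0$, so $Z_t$ — and more to the point, any flow restricted to a set containing $\mathcal{I}_p$ — fails to be kinematic $N$-expansive.

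The step that requires genuine care, and which I expect to be the main obstacle, is showing that $\mathcal{I}_p \subset \Lambda_Z(U) = \bigcap_{t} Z_t(U)$, so that the contradiction actually lands on the hypothesis. Since $p = \eta$ lies in $H_X(\gamma)$, it lies in $U$; since $Z = X$ outside a small flow box around $\gamma_p := \mathcal{O}_X(p)$ and $Z$ keeps $p$ periodic of period $T$ with $\mathcal{I}_p$ a $Z_T$-invariant arc inside $B[p,\epsilon_0]\cap \Pi_{p,r_0}$, shrinking $r_0$ (hence $\epsilon_0$) at the outset guarantees the whole local picture stays inside $U$; and $Z_T$-invariance of $\mathcal{I}_p$ together with the flow direction gives $Z_t(\mathcal{I}_p) \subset U$ for all $t$, whence $\mathcal{I}_p \subset \Lambda_Z(U)$. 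One must also check $Z \in \mathcal{V}$, where $\mathcal{V} \subset \mathcal{U}$ is the neighborhood furnished by the robustness hypothesis — this is arranged by applying Lemma \ref{1} to $\mathcal{V}$ in place of $\mathcal{U}$ from the start. With $\mathcal{I}_p \subset \Lambda_Z(U)$ and $Z_T|_{\mathcal{I}_p}$ the identity, $\Lambda_Z(U)$ is not kinematic $N$-expansive, contradicting the choice of $\mathcal{V}$ and $U$. Hence every $\eta \in H_X(\gamma) \cap \mathrm{P}(X_t)$ is hyperbolic.
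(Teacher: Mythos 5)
Your proposal is correct and follows essentially the same route as the paper: perturb via Lemma \ref{1} near the non-hyperbolic periodic orbit to produce $Z$ in the robustness neighborhood whose Poincar\'{e} map fixes a nondegenerate arc $\mathcal{I}_p$ pointwise, check that $\mathcal{I}_p\subset\Lambda_Z(U)$, and contradict kinematic $N$-expansiveness of $\Lambda_Z(U)$. Your version is in fact slightly tidier, since you observe that $\eta$ is already a (non-hyperbolic) periodic orbit of $X$ itself, so no intermediate vector field $Y$ is needed, and you make explicit the containment $\mathcal{I}_p\subset\bigcap_{t}Z_t(U)$ that the paper only asserts.
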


\begin{proof}
Let $H_X(\gamma)=\Lambda.$ Suppose that $\eta$ is not hyperbolic. Since $\Lambda$ is $C^1$ robustly kinematic $N$-expansive, there exist $\mathcal{U}$ of $X$ and $U$ of $\Lambda$ such that for every $Y \in \mathcal{U}(X),$ $\Lambda_Y(U)=\bigcap_{t \in \mathbb{R}} Y_t(U)$ is kinematic $N$-expansive. And there is a non-hyperbolic periodic point $p \in \eta \in P(Y_t)$ ($T>0$ is the period of $p$). By the proof of Lemma \ref{l1}, we obtain an arc $\mathcal{I}_p$ which is invariant and periodic for $Z_T.$

Now we can set $N_p=E_p^c \oplus E_p^s \oplus E_p^u.$ Then there exists $\epsilon >0$ such that exp$(E_p^c(\epsilon/4))=\mathcal{I} \subset U.$ Since $Z_T$ is the identity on ${\mathcal{I}_p}$, $\mathcal{I}_p \subset \Lambda_Z(U) \subset U.$ Then $\infty <  \#\mathcal{I}_p.$ This means that ${Z_T}$ is not kinematic $N$-expansive. This contradicts the fact that $Z \in \mathcal{U}.$ So we complete the proof.
\end{proof}

\begin{lemma}[Kupka-Smale \cite{MP}]
We say that a vector field $X \in \mathfrak{X}^1(M)$ is {\it Kupka-Smale} if it satisfies the following properties :
\begin{itemize}
\item[(a)] the critical elements of $X$ (the singularities and closed orbits) are hyperbolic,
\item[(b)] if $\sigma_1$ and $\sigma_2$ are critical elements of $X$ then the invariant manifolds $W^s(\sigma_1)$ and $W^u(\sigma_2)$ are transversal.
\end{itemize}
\end{lemma}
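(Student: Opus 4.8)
The plan is to prove the classical genericity statement underlying this lemma: the set $\mathcal{KS}\subset\mathfrak{X}^1(M)$ of Kupka--Smale vector fields is residual (it contains a dense $G_\delta$), so that by Baire's theorem a generic $X$ enjoys properties (a) and (b). The proof decomposes $\mathcal{KS}$ as a countable intersection of open and dense sets and checks each factor.

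First I would establish property (a). For singularities, let $\mathcal{G}_{\sigma}\subset\mathfrak{X}^1(M)$ be the vector fields all of whose singularities are hyperbolic; this set is open, because hyperbolic zeros of $X$ are nondegenerate and persist under $C^1$ perturbation, and dense, because near each zero $\sigma$ a small linear perturbation of $X$ pushes the spectrum of $DX(\sigma)$ off the imaginary axis. For periodic orbits I would fix $n\in\mathbb{N}$ and let $\mathcal{G}_n$ be the vector fields for which every closed orbit of period at most $n$ is hyperbolic. Using the Poincar\'e return maps $f_{x,t}$ of Section 2 together with the fact, supplied by Lemma~\ref{1} and Lemma~\ref{2}, that prescribed small perturbations of a return map are realized by admissible $C^1$ perturbations of the vector field, one shows via Thom's parametric transversality theorem that $\mathcal{G}_n$ is open and dense. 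Then $\mathcal{G}_\sigma\cap\bigcap_n\mathcal{G}_n$ is residual, and on it every critical element is hyperbolic, hence isolated, so there are only countably many critical elements.

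Next I would enforce property (b). For an ordered pair $(\sigma_1,\sigma_2)$ of hyperbolic critical elements, the manifolds $W^s(\sigma_1)$ and $W^u(\sigma_2)$ are injectively immersed submanifolds by the stable manifold theorem; exhaust them by compact pieces $W^s_k(\sigma_1)$ and $W^u_k(\sigma_2)$ obtained by flowing fundamental domains for bounded time. The set $\mathcal{T}_{\sigma_1,\sigma_2,k}$ of $X$ for which $W^s_k(\sigma_1)\pitchfork W^u_k(\sigma_2)$ is open; density follows from a $C^1$ perturbation of $X$ supported in a region disjoint from all critical elements (so step one is not disturbed) that moves one manifold into general position with respect to the other, again by the transversality theorem applied to a suitable parametrized family. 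Intersecting over all pairs of critical elements and all $k$ — a countable collection by the previous paragraph — yields a residual set on which (b) holds; its intersection with the residual set from the first step is residual and consists entirely of Kupka--Smale vector fields, which proves the lemma.

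The main obstacle is the density of $\mathcal{G}_n$, that is, making all periodic orbits of period at most $n$ hyperbolic simultaneously: a perturbation localized near one orbit may create or spoil another, so the argument must be carried out along the entire (compact) set of such orbits at once, expressing hyperbolicity as a transversality condition on the $1$-jet of the return map along the diagonal and verifying that the needed jet perturbations are attainable through $C^1$ changes of $X$ — which is exactly where Lemma~\ref{1} and Lemma~\ref{2} do the heavy lifting. The bookkeeping relating the return time, the return map, and the ambient flow, and the verification of openness uniformly in $n$, are the delicate points; property (b) and the Baire-category assembly are then comparatively routine.
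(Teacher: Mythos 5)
What you were handed is, despite the \texttt{lemma} wrapper, a definition rather than a proposition: it only declares what it means for $X\in\mathfrak{X}^1(M)$ to be Kupka--Smale, and the paper offers no proof whatsoever beyond the citation to Melo--Palis. There is therefore no argument of the paper's to compare yours against. What you have actually proved is the Kupka--Smale genericity theorem --- that the Kupka--Smale vector fields form a residual subset of $\mathfrak{X}^1(M)$ --- which is the substantive fact the authors silently invoke later when they write ``By the Kupka--Smale'' in the proof of their index lemma. Your sketch follows the classical Baire-category route of Peixoto and Palis--de Melo, and its skeleton is correct: hyperbolicity of singularities is open and dense by a linear perturbation of $DX(\sigma)$; hyperbolicity of closed orbits of period at most $n$ is reduced to the Poincar\'e return maps and realized by the flow-perturbation lemmas of Section 2; transversality of stable and unstable manifolds is arranged on compact pieces and intersected over the countable family of critical elements. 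Two cautions. The openness of $\mathcal{G}_n$ is more delicate than you indicate: under perturbation, closed orbits of period slightly larger than $n$ can drop to period at most $n$, so the standard formulation either bounds the periods strictly away from $n$ or works with a nested family $\mathcal{G}_n\supset\mathcal{G}_{n+1}$ defined via a compactness argument on the set of orbits of period in $[1/n,\,n]$; as stated, your $\mathcal{G}_n$ is not obviously open. More importantly for the paper itself, genericity does not justify the authors' later appeal to Kupka--Smale for a \emph{fixed} robustly kinematic $N$-expansive $X$: one must perturb to a Kupka--Smale field inside the robustness neighborhood and transport the transversality conclusion back, a step neither your proposal nor the paper supplies.
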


\begin{lemma}\label{index}
Let $\gamma$ be a hyperbolic closed orbit of $X_t.$ If the homoclinic class $H_X(\gamma)$ is $C^1$ robustly kinematic $N$-expansive then for any $\eta \in H_X(\gamma) \cap {\rm P}(X),$ {\rm ind} $\eta=$ {\rm ind} $\gamma.$
\end{lemma}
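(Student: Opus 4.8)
The plan is to argue by contradiction, exploiting that periodic orbits in the class are robustly hyperbolic (Lemma~\ref{periodic}) and that the periodic orbits homoclinically related to $\gamma$ are dense in $H_X(\gamma)$. Suppose there is $\eta\in H_X(\gamma)\cap{\rm P}(X)$ with $\mathrm{ind}(\eta)\neq\mathrm{ind}(\gamma)$. Let $\mathcal{U}$ be a $C^1$ neighborhood of $X$ and $U$ a neighborhood of $\Lambda:=H_X(\gamma)$ realizing the robust kinematic $N$-expansiveness, so that $\Lambda_Y(U)=\bigcap_{t\in\mathbb{R}}Y_t(U)$ is kinematic $N$-expansive for every $Y\in\mathcal{U}$. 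Since $H_X(\gamma)=\overline{\{\gamma':\gamma'\sim\gamma\}}$, I can pick a hyperbolic periodic orbit $\gamma'$ homoclinically related to $\gamma$ whose Hausdorff distance to $\eta$ is as small as desired; by Lemma~\ref{periodic}, $\eta$ is itself hyperbolic. Then $\mathrm{ind}(\gamma')=\mathrm{ind}(\gamma)\neq\mathrm{ind}(\eta)$, and both $\gamma'$ and $\eta$ lie in $\Lambda\subset U$.

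Next I would invoke Hayashi's connecting lemma for flows. Since $\gamma'$ and $\eta$ lie in the same chain component $H_X(\gamma)$, a $C^1$-small perturbation $Y\in\mathcal{U}$ of $X$ can be produced for which the continuations satisfy $W^s(\gamma'_Y)\cap W^u(\eta_Y)\neq\emptyset$ and $W^s(\eta_Y)\cap W^u(\gamma'_Y)\neq\emptyset$, with the connecting orbits contained in $U$ (possible because $\gamma'$ and $\eta$ are close to each other and both contained in $U$). As $\mathrm{ind}(\gamma'_Y)\neq\mathrm{ind}(\eta_Y)$, this is a heterodimensional cycle, entirely contained in $U$. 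Unfolding this cycle by a further $C^1$-small perturbation $Z\in\mathcal{U}$ — by standard cycle-unfolding arguments for flows, using Franks'-type local perturbations in the Poincar\'e section — yields a periodic point $p\in{\rm P}(Z_t)$, of period $T>0$, whose Poincar\'e return map has an eigenvalue of modulus one, with the orbit of $p$ contained in $U$; hence $p\in\mathcal{CR}(Z)\cap U\subset\Lambda_Z(U)$.

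Finally, exactly as in the proofs of Lemmas~\ref{l1} and~\ref{periodic}, I would apply Lemma~\ref{1} to the non-hyperbolic periodic point $p$ of $Z$: choosing the linear isomorphism $\mathcal{O}:N_p\to N_p$ to be the identity on the center eigendirection, a $C^1$-small perturbation $W\in\mathcal{U}$ of $Z$ has Poincar\'e map $g$ equal to the identity on a small invariant arc $\mathcal{I}_p\subset B[p,\epsilon_0]\cap\Pi_{p,r_0}$ through $p$, which can be arranged to lie in $U$, so that $\mathcal{I}_p\subset\Lambda_W(U)$ and $W_T(x)=x$ for all $x\in\mathcal{I}_p$. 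Then for a suitable $\delta>0$ the set $\Gamma_\delta^{W_T}(p)$ contains the infinite arc $\mathcal{I}_p$, so $\Lambda_W(U)$ is not kinematic $N$-expansive, contradicting $W\in\mathcal{U}$; hence $\mathrm{ind}(\eta)=\mathrm{ind}(\gamma)$. The main difficulty will be the localization in the second and third steps: guaranteeing that the connecting-lemma perturbation and the subsequent cycle-unfolding keep every newly created invariant set — the heterodimensional cycle, the non-hyperbolic periodic orbit, and its center arc — inside the fixed neighborhood $U$, hence inside $\Lambda_W(U)$, while staying within $\mathcal{U}$. Once this localization is in place, the contradiction with kinematic $N$-expansiveness is identical to the ones used in Lemmas~\ref{l1} and~\ref{periodic}.
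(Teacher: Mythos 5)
Your route is genuinely different from the paper's. The paper's own proof is short and local: assuming $\mathrm{ind}(\eta)\neq\mathrm{ind}(\gamma)$, it picks points $p\in\gamma$ and $q\in\eta$ with $d(p,q)<\delta$ ($\delta$ the expansive constant), argues that the local invariant manifolds $W^s_\epsilon(p)$ and $W^u_\epsilon(q)$ must meet, and then invokes Kupka--Smale transversality to force the indices to coincide, yielding a contradiction. It never leaves the original vector field except implicitly through the robustness hypothesis, and it does not construct any heterodimensional cycle or non-hyperbolic periodic orbit. Your proposal instead runs the full ``connecting lemma $\to$ heterodimensional cycle $\to$ unfold to a non-hyperbolic periodic orbit $\to$ invariant arc'' machinery, reducing at the end to the same arc argument as Lemmas~\ref{l1} and~\ref{periodic}. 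That is the standard heavy-artillery strategy in the literature on robustly expansive homoclinic classes, and it is conceptually sound, but it proves much more than the paper attempts and imports correspondingly more unverified input.

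The genuine gaps in your version are the two middle steps, not just the localization you flag. First, Hayashi's connecting lemma does not apply merely because $\gamma'$ and $\eta$ sit in the same homoclinic class: you need orbit segments of $W^u(\eta)$ accumulating on $W^s(\gamma')$ (and vice versa), or else the Bonatti--Crovisier connecting lemma for pseudo-orbits together with chain-relatedness, and you must perform the two connections successively without the second perturbation destroying the first intersection; none of this is argued. Second, ``unfolding a heterodimensional cycle to produce a periodic orbit with an eigenvalue of modulus one'' is itself a substantial theorem (known for co-index one cycles of diffeomorphisms), and no flow version is cited or proved; a cheaper alternative more in line with the paper's toolbox would be to observe that when $\mathrm{ind}(\gamma')\neq\mathrm{ind}(\eta)$ a dimension count forces one of the two connecting orbits to be a non-transverse intersection of $W^s$ and $W^u$, and then to run the arc construction of Lemma~\ref{l2} directly, bypassing the non-hyperbolic periodic orbit altogether. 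Finally, the localization problem you name is real and is exactly where such arguments usually require the most work, since the connecting perturbations can a priori enlarge $\Lambda_Z(U)$ or push the new orbits out of $U$. As it stands, the proposal is a plausible program rather than a proof.
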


\begin{proof}
Assume that {\rm ind} $\eta \neq$ {\rm ind} $\gamma.$ Then there exist $p \in \gamma$ and $q \in \eta$ such that $d(p, q) < \delta,$ where $\delta$ is the $N$-expansive constant. Since $H_X(\gamma)$ be $C^1$ robustly kinematic $N$-expansive, $W^s_{\epsilon}(p) \cap W^u_{\epsilon}(q) \neq \emptyset.$ By the Kupka-Smale, $W^s_{\epsilon}(p) \pitchfork W^u_{\epsilon}(q)\neq \emptyset$ and {\rm ind} $\eta=$ {\rm ind} $\gamma.$ This is a contradiction, so we complete the proof.
\end{proof}

\begin{theorem}\label{hh}
Let $X \in \mathfrak{X}^1(M).$ The homoclinic class $H_X(\gamma)$ is $C^1$ robustly kinematic $N$-expansive if and only if $H_X(\gamma)$ is hyperbolic.
\end{theorem}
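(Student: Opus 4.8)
The statement is the two-sided version of Theorem 3, so the plan is to prove both implications, the ``only if'' direction carrying the content. The ``if'' direction is soft: if $H_X(\gamma)$ is hyperbolic, pick a neighbourhood $U$ of $H_X(\gamma)$ and a $C^1$ neighbourhood $\mathcal{U}$ of $X$ so that for every $Y\in\mathcal{U}$ the maximal invariant set $\Lambda_Y(U)=\bigcap_{t\in\mathbb{R}}Y_t(U)$ is still hyperbolic (persistence of hyperbolicity). A hyperbolic set of a flow is expansive in the Bowen--Walters sense; in particular each $\Lambda_Y(U)$ is kinematic expansive, hence kinematic $N$-expansive for every $N$, so $H_X(\gamma)$ is $C^1$ robustly kinematic $N$-expansive.

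For the ``only if'' direction, assume $H_X(\gamma)$ is $C^1$ robustly kinematic $N$-expansive, with witnesses $\mathcal{U}$ and $U$. First I would reduce to the non-singular case: a homoclinic class is connected, so by Lemma \ref{hsing} a singularity of $X$ inside $H_X(\gamma)$ would force $H_X(\gamma)$ to be a single point; discarding this trivial case gives $H_X(\gamma)\cap{\rm Sing}(X)=\emptyset$, and after shrinking $U$ and $\mathcal{U}$ the same holds for each $\Lambda_Y(U)$. Next, by Lemma \ref{periodic} every periodic orbit contained in $\Lambda_Y(U)$ is hyperbolic, and by Lemma \ref{index} each such orbit has index equal to $\mathrm{ind}(\gamma)$. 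Thus $H_X(\gamma)$ is a robustly periodic-hyperbolic homoclinic class of constant index without singularities.

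The core step is to promote this to uniform hyperbolicity of $H_X(\gamma)$ for $X_t$, working with the linear Poincar\'e flow $\Psi_t$ on the normal bundle $\mathcal{N}$ over $H_X(\gamma)$ and using Lemmas \ref{1} and \ref{2} to realize perturbations of Poincar\'e return maps as genuine $C^1$ perturbations of $X$ supported in flow boxes. I would show: (i) $\Psi_t$ admits a dominated splitting $\mathcal{N}|_{H_X(\gamma)}=\Delta^s\oplus\Delta^u$ with $\dim\Delta^s=\mathrm{ind}(\gamma)-1$ --- otherwise a Franks-type perturbation along an approximating periodic orbit would create either a non-hyperbolic periodic orbit (contradicting Lemma \ref{periodic}) or a periodic orbit of a different index (contradicting Lemma \ref{index}); and (ii) $\Delta^s$ is uniformly contracted and $\Delta^u$ uniformly expanded by $\Psi_t$ --- otherwise, combining the closing lemma with Lemmas \ref{1} and \ref{2}, one produces $Y\in\mathcal{U}$ with a periodic orbit in $\Lambda_Y(U)$ carrying a small invariant arc on which the return map is an isometry, whose dynamical $\delta$-ball then contains infinitely many points, contradicting the kinematic $N$-expansiveness of $\Lambda_Y(U)$ exactly as in the proof of Lemma \ref{l1}. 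Once $\Psi_t$ is hyperbolic over the compact, invariant, non-singular set $H_X(\gamma)$, I would extend $\Delta^s\oplus\Delta^u$ to a splitting $T_xM=E^s_x\oplus\langle X(x)\rangle\oplus E^u_x$ via ${f_{x,t}}\circ{\rm exp}_x={\rm exp}_{X_t(x)}\circ\Psi_t$ together with the uniform bound on the angle between $X(x)$ and $N_x$ (valid since $X$ has no zeros on $H_X(\gamma)$); this is the hyperbolic splitting required by the definition, so $H_X(\gamma)$ is hyperbolic.

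The main obstacle is step (ii): passing from the absence of non-hyperbolic and wrong-index periodic orbits to \emph{uniform} rates of contraction and expansion requires a Ma\~n\'e-type ergodic closing argument for the linear Poincar\'e flow, and one must verify that the invariant arcs produced by the perturbations genuinely lie inside the maximal invariant set $\Lambda_Y(U)$, so that the failure of kinematic $N$-expansiveness is real rather than merely ``near'' $\Lambda_Y(U)$. Establishing the dominated splitting in step (i) is also delicate, but it follows the by-now standard scheme for robustly expansive-type homoclinic classes; the remaining steps are either soft or routine.
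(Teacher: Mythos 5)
Your proposal is correct in outline, and its decisive step is the same as the paper's: both reduce the ``only if'' direction to showing that the absence of non-hyperbolic or wrong-index periodic orbits in nearby maximal invariant sets forces uniform contraction, and both do this by a Ma\~n\'e-type ergodic closing argument (Birkhoff averages along a hypothetically non-contracted orbit, Riesz representation, ergodic decomposition, closing by periodic orbits $p_n$ of perturbed flows $\psi^n$ whose derivative along the stable bundle would then expand at rate $e^{\gamma t_n}$, contradicting Lemmas \ref{periodic} and \ref{index}). Where you genuinely differ is in the scaffolding, and your version is the more complete one. The paper's proof begins ``we show that $T_{H_X(\gamma)}M=E\oplus F$'' but never constructs this splitting, and a two-bundle splitting of the full tangent bundle cannot literally be hyperbolic for a flow, since the flow direction is neither contracted nor expanded; the argument has to be run on the linear Poincar\'e flow $\Psi_t$ over the normal bundle and then transported back through $f_{x,t}\circ\exp_x=\exp_{X_t(x)}\circ\Psi_t$, exactly as you set it up (the passage from hyperbolicity of $\Psi_t$ to hyperbolicity of $DX_t$ on a compact nonsingular set is a genuine theorem, but a standard one). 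Your step (i), producing the dominated splitting by Franks-type perturbations before attempting uniform estimates, is entirely absent from the paper and is needed for the ergodic closing step to have a bundle to estimate; likewise the paper omits the ``if'' direction altogether, which your persistence-of-hyperbolicity argument supplies. One caveat that afflicts your ``if'' direction and the paper equally: with the literal definition of $\Gamma_\delta(x)$ used here (no reparametrization), the set $\Gamma_\delta(x)$ of a regular point always contains a small orbit segment $X_{[-s,s]}(x)$ and hence infinitely many points, so the definition must be read as counting orbits rather than points for either argument to go through. The obstacles you flag at the end --- establishing domination, and checking that the perturbed invariant arcs really lie in $\Lambda_Y(U)$ --- are precisely the points the paper glosses over.
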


\begin{proof}
We show that $T_{H_X(\gamma)}M=E \oplus F$ satisfying $E$ is contracting and $F$ is expanding. For convenience, let
\begin{itemize}
\item $\Lambda=H_X(\gamma),$
\item $\mathcal{U}$ be a $C^1$ neighborhood of $X$,
\item $U$ is a neighborhood of $H_X(\gamma)$ satisfying $H_X(\gamma)=\cap_{t \in \mathbb{R}}X_t(U)$ is $N$- expansive.
\end{itemize}

To prove that the bundle $E$ is uniformly contracting, it is enough to show that $\lim_{t \to \infty}\; {\rm inf}\; \|D{X_t}|_{E_x}\|=0$ for any $x\in \Lambda.$ Suppose by contradiction that there is $x \in \Lambda$ such that $$\lim_{t \to \infty}\; {\rm inf}\; \|D{X_t}|_{E_x}\|>0.$$ Then there is $s_n \to \infty$ as $n \to \infty$ such that $$\lim_{s_n \to \infty}\frac{1}{s_n}\;{\rm log}\; \|DX_{s_n}|_{E_x}\| \geq 0.$$
Let $C^0(\Lambda)$ be the set of real continuous functions defined on $\Lambda$ with the $C^0$-topology, and define the sequence of continuous operators
\begin{align*}
{\Theta_n} :\; C^0(\Lambda) &\to \mathbb{R}\\
             \varphi \;&\mapsto \frac{1}{s_n} \int_0^{s_n} \varphi(X_s(x))ds.
\end{align*}
There exists a convergent subsequence of $\Theta_n,$ which we still denote by $\Theta_n,$ converging to a continuous map $\Theta_n : C^0(\Lambda) \to \mathbb{R}.$

Let $\mathcal{M}(\Lambda)$ be the space of measures with support on $\Lambda.$ By the Riesz's Theorem, there exists $\mu \in \mathcal{M}(\Lambda)$ such that
\begin{equation}
\int_{\Lambda} \varphi d\mu=\lim_{s_n \to \infty}\frac{1}{s_n}\int_0^{s_n} \varphi(X_s(x))ds=\Theta(\varphi),
\end{equation}
for every continuous map $\varphi$ defined on $\Lambda.$ It is clear that such $\mu$ is invariant by the flow $X.$ Define
\begin{align*}
&\varphi_{X} : C^0(\Lambda) \longrightarrow \mathbb{R} \;{\rm by }\\
&\varphi_{X}(p)=\partial_l({\rm log} \|D\phi_l |_{E_p}\|)_{l=0}=\lim_{l \to 0}\frac{1}{l} {\rm log} \|DX_l |_{E_p}\|.
\end{align*}
This map is continuous, and so it satisfies (1). On the other hand, for any $T \in \mathbb{R}$,
\begin{align}
\frac{1}{T} \int_0^T \varphi_{X}(X_s(p))ds=&\frac{1}{T}\int_0^T \partial_l({\rm log} \|DX_l |_{E_{X_x(p)}}\|)_{l=0}ds
                                                =&\frac{1}{T}{\rm log} \|DX_T|_{E_p}\|.
\end{align}
So we have
\begin{equation}
\int_{\Lambda}\varphi_{X}d\mu \geq 0.
\end{equation}
By the Birkhoff Ergodic Theorem, we have that
$$\int_{\Lambda} \varphi_{X} d\mu=\int_{\Lambda} \lim_{T \to \infty}\frac{1}{T} \int_0^T \varphi_{X}(X_s(y))dsd\mu(y).$$

Let $\Sigma_{X}$ be the set of strongly closed points. Since $\mu$ is invariant and Supp$(\mu) = \Lambda$,
$$\mu(\Lambda \cap ({\rm Sing}(\Lambda \cup \Sigma_{X})))=1.$$
So, $\mu(\Lambda \cap \Sigma_{X})>0.$

By the ergodic decomposition for invariant measures, we can suppose that $\mu$ is ergodic. Hence $\mu(\Lambda \cap \Sigma_{X})=1.$
Now we obtain that there exists $y \in \Lambda \cap \Sigma_{X}$ such that
\begin{equation}
\lim_{T \to \infty} \frac{1}{T} \int_0^T \varphi_{X}(X_s(y))ds \geq 0.
\end{equation}
Since $y \in \Sigma_{X}$ there are $\delta_n \to 0$ as $n \to \infty$, $\psi^n \in \mathcal{U}$ and $p_n \in {\rm Per}_{\psi^n}(\Lambda_{\psi^n}(U))$ with period $t_n$ such that $$\|\psi^n-X\| < \delta_n\;\;{\rm and}\;\; {\rm dist}(\psi^n_s(p_n), \phi_s(y)) < \delta_n \;\; {\rm for}\; 0 \leq s \leq {t_n},$$ where $\psi_s^n$ is the flow induced by $\psi^n.$ Observe that $t_n \to \infty$ as $n \to \infty.$ Otherwise if $y \in {\rm Per}(X)\cap \Lambda$ and $t_y$ is the period of $y,$ (2) and (4) imply that $DX_{t_y}|E_y$ expands.
Let $\gamma >0$ be arbitrarily small. By (4), there is $T_\gamma$ such that for $t \geq T_\gamma$
\begin{equation}
\frac{1}{t} \int_0^t \varphi_{X}(X_s(y))ds \geq \gamma.
\end{equation}
Since $t_n \to \infty$ as $n \to \infty$ we can assume that $t_n > T_\gamma$ for every $n.$ The continuity of the splitting $E \oplus F$ over $T_{\Lambda}M$ with the flow together with (5) given, for $n$ big enough, that $$\frac{1}{t_n}{\rm log} \|D\psi_{t_n}^n|_{E_{p_n}^{\psi^n}}\| \geq \gamma.$$ Thus $$\|D\psi_{t_n}^n|_{E_{p_n}^{\psi^n}}\| \geq e^{\gamma{t_n}}.$$ Taking $n$ sufficiently large and $\gamma <0$ sufficiently small, this last inequality contradicts. This completes the proof that $E$ is a uniformly contracting bundle.
\end{proof}

\noindent {\bf End of the Proof of Theorem 3.}
By Theorem \ref{hh}, we complete the proof.

\bigskip
\noindent{\bf Acknowledgement.}
First author supported by National Research Foundation of Korea(NRF) No. 20R1A2B4001892. Second author is supported by the Korea (NRF) grant funded by 2016R1D1A1B03931962. Third author is supported by Korea(NRF) No. 2017R1D1A1B03032148.

\end{document}